\documentclass[12pt]{amsart}
\usepackage{amsthm,amsfonts,amssymb,amscd,mathrsfs,
latexsym,amsmath
}
\usepackage{bezier,longtable,amstext}
\usepackage{amssymb}

\sloppy
\topmargin-10mm
\oddsidemargin-5mm
\evensidemargin-5mm

\setlength{\textwidth}{17cm}
\setlength{\textheight}{25cm}
\usepackage[all]{xy}

\newcommand{\Pic}{\operatorname{Pic}\nolimits}

\renewcommand{\phi}{\varphi}

\newtheorem{theorem}{Theorem}[section]
\newtheorem{proposition}[theorem]{Proposition}

\newtheorem{lemma}[theorem]{Lemma}
\newtheorem{corollary}[theorem]{Corollary}

\theoremstyle{definition}
\newtheorem{definition}[theorem]{Definition}
\newtheorem{example}[theorem]{Example}
\newtheorem{proposition-definition}[theorem]{Proposition-Definition}

\nonstopmode

\begin{document}

\title{An algebraic-geometric construction of ind-varieties of
generalized flags}

\author[I.Penkov]{\;Ivan~Penkov}

\address{
Jacobs University Bremen \\
Campus Ring 1, 28759 
Bremen, Germany}
\email{i.penkov@jacobs-university.de}

\author[Tikhomirov]{\;Alexander S.~Tikhomirov}

\address{
Faculty of Mathematics\\
National Research University
Higher School of Economics\\
6 Usacheva str.\\
119048 Moscow, Russia}
\email{astikhomirov@mail.ru}

\thispagestyle{empty}

\begin{abstract}
We define the class of admissible linear embeddings of flag
varieties. The definition is given in the general language of
algebraic geometry. We then prove that an admissible linear
embedding of flag varieties has a certain explicit form in
terms of linear algebra. This result enables us to show that
any direct limit of admissible embeddings of flag varieties is
isomorphic to an ind-variety of generalized flags as defined in
\cite{DP}. These latter ind-varieties have been introduced in
terms of the ind-group $SL(\infty)$ (respectively, $O(\infty)$
or $Sp(\infty)$ for isotropic generalized flags), and the 
current paper constructs them in purely algebraic-geometric 
terms.

2010 Mathematics Subject Classification: Primary 14M15; 
Secondary 14J60, 32L05.

\keywords{Keywords: ind-variety, flag variety.}

\end{abstract}

\maketitle

\section{Introduction}\label{sec1}
\label{Introduction}

Flag varieties play a fundamental role in geometry, and so do 
their analogues in ind-geometry. In this paper, we would like 
to place these analogues under the looking glass and provide a 
new characterization of the ind-varieties of generalized flags
constructed in \cite{DP}. Around 20 years ago, I. Dimitrov 
and the first author realized that in the context of 
ind-geometry the notion of a flag of vector subspaces in an
ambient infinite-dimensional vector space is rather 
subtle. More precisely, in addition to the obvious three 
types of infinite flags, that is, chains of vector subspaces 
enumerated by $\mathbb{Z}_{>0}$, $\mathbb{Z}_{<0}$ or  
$\mathbb{Z}$, there is the need to consider chains of 
subspaces enumerated by more general totally ordered sets in 
which every element has an immediate predecessor or an 
immediate successor, but possibly not both. Such chains, 
satisfying the additional
condition that every vector of the ambient vector space is 
contained in some space of the chain but not in its immediate 
predecessor, were christened \textit{generalized flags} in 
\cite{DP}. The main result of \cite{DP} can be summarized 
roughly as follows: generalized flags in a 
countable-dimensional vector space are in a natural 1-1 
correspondence with splitting parabolic subgroups $P$ of the 
ind-group $GL(\infty)$, and hence the points of homogeneous 
ind-spaces of the form $GL(\infty)/P$ can be thought of as 
generalized flags. A similar statement about isotropic 
generalized flags holds for the ind-groups $O(\infty)$ and 
$Sp(\infty)$. In particular, the concept of generalized flag, 
and therefore also the notion of an ind-variety of 
generalized flags, has been motivated in the past by the 
notion of a parabolic subgroup of an ind-group like 
$GL(\infty)$, $O(\infty)$, $Sp(\infty)$. 

The main purpose of the present paper is to propose another,
purely algebraic-geometric, approach to the ind-varieties of 
generalized flags. More precisely, we define 
\textit{admissible linear embeddings} of usual flag varieties
\begin{equation}\label{emb 1}
Fl(m_1,...,m_k,V)\hookrightarrow Fl(n_1,...,n_{\tilde{k}},V')
\end{equation}
and show that an ind-variety obtained as a direct limit of
such linear embeddings is isomorphic to an ind-variety of 
generalized flags. In particular, such a linear direct limit 
is automatically a homogeneous ind-space of $GL(\infty)$. We 
also consider isotropic generalized flags and prove a similar 
result for the ind-groups $O(\infty)$ and $Sp(\infty)$.
In this way, the notion of an admissible linear embedding of 
flag varieties leads naturally to the concept of generalized
flag. A small part of this program has already been 
carried in our paper \cite{PT} where we characterize linear 
embeddings of grassmannians, and then as a consequence 
describe linear ind-grassmannians up to isomorphism.

Our main new result concerning embeddings of 
finite-dimensional flag varieties is finding an explicit form
of a class of embeddings \eqref{emb 1} which we call 
\textit{admissible}. We define an admissible linear embedding
in general algebraic-geometric terms, and then show that such 
an embedding is nothing but an extension of a flag from
$Fl(m_1,...,m_k,V)$ to a possibly longer flag in
$Fl(n_1,...,n_{\tilde{k}},V')$, given by an explicit formula
from linear algebra. We call the latter embeddings 
\textit{standard extensions}. This enables us to prove that a
direct limit of admissible linear embeddings is isomorphic to
an ind-variety of generalized flags as in \cite{DP}, as it is
relatively straightforward to show that direct limits of
standard extensions have this property. 

The paper is concluded by an appendix in which we present two 
examples of direct limits of linear but non-admissible 
embeddings of flag varieties, that are not isomorphic to 
ind-varieties of generalized flags.

{\bf Acknowledgements.} I.P. thanks Vera Serganova for a useful
discussion, which took place several years ago, on the general 
idea of an algebraic-geometric approach to ind-varieties of 
generalized flags. I.P. was supported in part by DFG-grant PE 
980/7-1. A.S.T. thanks the Max Planck Institute for Mathematics in Bonn, where this work was partially done during the winter of 2017, for hospitality and financial support.

{\bf Notation.} The sign $\subset$ stands for not necessarily 
strict set-theoretic inclusion. By $G(m,V)$ we denote the 
grassmannian of $m$-dimensional subspaces of $V$ for $1\le 
m\le\dim V$. We also use the notation $\mathbb{P}(V)$ for 
$G(1,V)$. If $a:X\to Y$ is a morphism of algebraic varieties,
by $a^*$ and $a_*$ we denote respectively the pullback or 
pushforward of vector bundles. The superscript $(\cdot)^{\vee}$
indicates dual space or dual vector bundle.

\vspace{1cm}
\section{Definition of linear embedding of flag varieties} 
\label{preliminaries}
\vspace{5mm}

In this section we give the basic definitions of linear 
embeddings of flag varieties including the case of isotropic 
flag varieties.

The base field is $\mathbb{C}$ and all vector spaces, 
varieties and ind-varieties considered below are defined over
$\mathbb{C}$. Let $V$ be a vector space of dimension $\dim 
V\ge2$. For any increasing sequence of positive integers 
$1\le m_1<...< m_k<\dim V$, we consider the \textit{flag 
variety} $Fl(m_1,...,m_k,V):=\{(V_{m_1},...,V_{m_k})\in 
G(m_1,V)\times...\times G(m_k,V)\ |\ V_{m_1}\subset...\subset 
V_{m_k}\}$. We denote its points by $F=(0\subset V_{m_1}
\subset...\subset V_{m_k}\subset V)$ or sometimes by 
$F=(V_{m_1}\subset...\subset V_{m_k})$. The ordered $k$-tuple
$(m_1,...,m_k)$ is the \textit{type} of a flag $F\in Fl(m_1,...,m_k,V)$.

There is a natural embedding 
$$
j:\ Fl(m_1,...,m_k,V)\hookrightarrow G(m_1,V)\times...\times 
G(m_k,V)
$$ 
and there are projections 
$$
\pi_i:\ Fl(m_1,...,m_k,V)\to G(m_i,V),\ F=(V_{m_1}\subset 
...\subset V_{m_k})\mapsto V_{m_i},\ i=1,...,k.
$$ 
We have 
$$
{\rm{Pic}}~Fl(m_1,...,m_k,V)=\mathbb{Z}[L_1]\oplus...\oplus
\mathbb{Z}[L_k],
$$
where
\begin{equation*}\label{generators}
L_i:=\pi_i^*\mathcal{O}_{G(m_i,V)}(1),\ \ \ i=1,...,k.
\end{equation*}
Here, $\mathcal{O}_{G(m_i,V)}(1)$ denotes the invertible sheaf 
on $G(m_i,V)$ satisfying $H^0(\mathcal{O}_{G(m_i,V)}(1))=
\wedge^{m_i}(V^*)$. By definition, $[L_1],...,[L_k]$ is a 
\textit{preferred set of generators} of ${\rm{Pic}}~Fl(m_1,...,
m_k,V)$.

\vspace{4mm}

Let $V$ be equipped with a non-degenerate symmetric bilinear 
form on $V$. For our purposes, we can assume that $\dim 
V\ge7$. For $1\le k\le[\frac{\dim V}{2}]$, the 
\textit{orthogonal grassmannian} $GO(m,V)$ is defined as the
subvariety of $G(m,V)$ consisting of isotropic 
$m$-dimensional subspaces of $V$. Unless $\dim V=2m$, 
the variety $GO(m,V)$ is a smooth irreducible variety. For 
$\dim V=2m$, the orthogonal grassmannian is a disjoint 
union of two isomorphic smooth irreducible components, and 
they are both isomorphic to $GO(m-1,V')$ where $\dim V'=2m-1$. 
Slightly abusing notation, we will denote by $GO(m,V)$ each of 
these two components. 

If $m\ne\frac{\dim V}{2}-1$, then $\Pic GO(m,V)=\mathbb{Z}
[\mathcal{O}_{GO(m,V)}(1)]$, where the sheaf $\mathcal{O}_{GO
(m,V)}(1)$ posesses the following property: if $t: GO(m,V)
\hookrightarrow G(m,V)$ is the tautological embedding, then
\begin{equation*}\label{restrictn Pic}
t^*\mathcal{O}_{G(m,V)}(1)\cong\Bigl\{
\begin{array}{lll}
\mathcal{O}_{GO(m,V)}(1) & \mathrm{for} & 
m\ne\frac{\dim V}{2},\\
\mathcal{O}_{GO(k,V)}(2) &  \mathrm{for} & 
m=\frac{\dim V}{2}.
\end{array}
\end{equation*}
If $m=\frac{\dim V}{2}-1$, then for any $V_{m-1}\in GO(m-1,V)$ 
there is a unique $V_m\in GO(m,V)$ such that $V_{m-1}\subset 
V_m$. Thus there is a well-defined morphism
\begin{equation}\label{flag ortho n-1,n}
\theta:\ GO(m-1,V)\to GO(m,V),\ \ \ V_{m-1}\mapsto V_m,
\ \ \ \textrm{where}\ \ \ V_m\supset V_{m-1}. 
\end{equation}
Consequently,
\begin{equation*}\label{Pic GO special}
\Pic GO(m-1,V)=\mathbb{Z}[\theta^*\mathcal{O}_{GO(m,V)}(1)]
\oplus\mathbb{Z}[\mathcal{O}_{GO(m-1,V)}(1)],
\end{equation*}
where by $\mathcal{O}_{GO(m-1,V)}(1)$ we denote the 
$\theta$-relatively ample Grothendieck sheaf determined by the 
property that $\theta_*\mathcal{O}_{GO(m-1,V)}(1)$ is the 
universal quotient bundle on $GO(m,V)$.

\vspace{2mm}

Next, let $1\le m_1<...< m_k$ be an increasing sequence of 
positive integers, where $m_k\le [\frac{\dim V}{2}]$. The \textit{orthogonal flag variety} 
$FlO(m_1,...,m_k,V)$ is defined as
$$
FlO(m_1,...,m_k,V):=\{(V_{m_1},...,V_{m_k})\ |\ V_{m_1}\in GO(m_i,V),\ V_{m_1}\subset...\subset V_{m_k}\},
$$ 
where, according to our convention, we assume $GO(m_k,V)$
connected if $m_k=\frac{\dim V}{2}$.
Similarly to the case of usual flag varieties, there is a 
natural embedding $j:\ FlO(m_1,...,m_k,V)
\hookrightarrow GO(m_1,V)\times...\times GO(m_k,V)$ and there 
are projections $\pi_i:\ Fl(m_1,...,m_k,V)\to GO(m_i,V),\ 
(V_{m_1}\subset...\subset V_{m_k})\mapsto V_{m_i},\ 
i=1,...,k$. Unless $m_k=\frac{\dim V}{2}-1,$ we have 
\begin{equation*}\label{Pic FlO}
{\rm{Pic}}~FlO(m_1,...,m_k,V)=\mathbb{Z}[L_1]\oplus...\oplus
\mathbb{Z}[L_k],
\end{equation*}
where
\begin{equation*}\label{generators ortho}
L_i:=\pi_i^*\mathcal{O}_{GO(m_i,V)}(1),\ \ \ i=1,...,k.
\end{equation*}
The isomorphism classes $[L_i]$ are a \textit{preferred set 
of generators} of ${\rm{Pic}}~FlO(m_1,...,m_k,V)$. 
If $m_k=\frac{\dim V}{2}-1,$ then there is an additional
preferred generator $[(\theta\circ\pi_{k-1})^*
\mathcal{O}_{GO(m_k+1,V)}(1)]$ of ${\rm{Pic}}FlO(m_1,...,m_k,
V)$.
\vspace{4mm}

Let now $V$ be equipped with a non-degenerate symplectic 
form. This implies that $\dim V\in2\mathbb{Z}_{>0}$. Assume 
$1\le m\le \frac{1}{2}\dim V$. By definition, the 
\textit{$m$-th symplectic grassmannian} $GS(m,V)$ is the 
smooth irreducible subvariety of $G(m,V)$ consisting of 
isotropic $m$-dimensional subspaces of $V$. It is known that 
\begin{equation*}\label{Pic GS}
\Pic GS(m,V)=\mathbb{Z}[\mathcal{O}_{GS(k,V)}(1)],\ \ \  
\mathcal{O}_{GS(k,V)}(1)=i^*\mathcal{O}_{G(k,V)}(1),
\end{equation*}
where $i:GS(m,V)\hookrightarrow G(m,V)$ is the tautological 
embedding. For a fixed increasing sequence of positive integers $1\le m_1<...\le m_k\le\frac{\dim V}{2}$, the 
\textit{symplectic flag variety} is defined as
$$
FlS(m_1,...,m_k,V):=\{(V_{m_1},...,V_{m_k})\in 
GS(m_1,V)\times...\times GS(m_k,V)\ |\ V_{m_1}\subset...
\subset V_{m_k}\}.
$$ 
We have a natural embedding $j:\ FlS(m_1,...,m_k,V)
\hookrightarrow GS(m_1,V)\times...\times GS(m_k,V)$ and 
projections $\pi_i:\ Fl(m_1,...,m_k,V)\to GS(m_i,V),\ (V_{m_1}
\subset ...\subset V_{m_k})\mapsto V_{m_i},\ i=1,...,k,$. 
Moreover,
\begin{equation*}\label{Pic FlS}
{\rm{Pic}}~FlS(m_1,...,m_k,V)=\mathbb{Z}[L_1]\oplus...\oplus
\mathbb{Z}[L_k],
\end{equation*}
where
\begin{equation*}\label{generators sympl}
L_i:=\pi_i^*\mathcal{O}_{GS(m_i,V)}(1),\ \ \ i=1,...,k,
\end{equation*}
The isomorphism classes $[L_i]$ are a \textit{preferred set of 
generators} of ${\rm{Pic}}~FlS(m_1,...,m_k,V)$.

We now proceed to the definition of linear embeddings of flag 
varieties and their orthogonal and symplectic analogues.

\begin{definition}\label{lin emb isotr flags}
Let $k$ and $\tilde{k}$ be positive integers with $1<k\le
\tilde{k}$. An embedding of flag varieties 
\begin{equation*}\label{usual embedding}
\varphi:\ X\hookrightarrow Y,
\end{equation*}
where $X=Fl(m_1,...,m_k,V),\ Y=Fl(n_1,...,n_{\tilde{k}},V')$,
or $X=FlO(m_1,...,m_k,V),\ Y=FlO(n_1,...,n_{\tilde{k}},V')$,
or $X=FlS(m_1,...,m_k,V),\ Y=FlS(n_1,...,n_{\tilde{k}},V')$,
is a \textit{linear embedding} if, for any $j$, $1\le j\le\tilde{k}$, we have
\begin{equation*}\label{linearity}
[\varphi^*M_j]=0\ \ \ \textrm{or}\ \ \  [\varphi^*M_j]=[L_i]
\end{equation*}
for some $i$, $1\le i\le k$, where $[L_1],...,[L_k]$ and 
$[M_1],...,[M_{\tilde{k}}]$ are the preferred sets of 
generators of ${\rm{Pic}}X$ and ${\rm{Pic}}Y$.
\end{definition}

\begin{example}\label{lin emb isotr grass}
Assume that $k=\tilde{k}=1$ in Definition \ref{lin emb
isotr flags}. Then $X$ and $Y$ are grassmannians, orthogonal 
grassmannians, or symplectic grassmannians. In all cases, 
except when $X=GO(m,V)$ and $Y=GO(n,V')$ for $(m,\dim V)=
(l-1,2l)$ or $(n,\dim V')=(r-1,2r)$, a linear embedding 
$\varphi:X\to X$ is simply an embedding with $\varphi^*[M]=
[L]$, where $[L]$ and $[M]$ are respective ample generators of the Picard groups ${\rm{Pic}}Y$ and ${\rm{Pic}}X$, 
cf. \cite[Def. 2.1]{PT}.

In the remaining cases, a linear embedding $\varphi:X\to Y$   
exists if and only if $X\simeq GO(l-1,V)$, $Y\simeq GO(r-1,V'
)$ for $l\le r$, and here the linearity of $\phi$ implies 
$\phi^*\mathcal{O}_{GO(r-1,V')}(1)\cong\mathcal{O}_{GO(l-1,V)}
(1)$, $\varphi^*\theta'^{*}\mathcal{O}_{GO(r,V')}(1)\cong
\theta^*\mathcal{O}_{GO(l,V)}(1)$, where $\theta:GO(l-1,V)\to 
GO(l,V)$ 
and $\theta': GO(r-1,V')\to GO(r,V')$ are the projections 
defined in \eqref{flag ortho n-1,n}. To see this, one has to
show (we leave this to the reader) that it is impossible to
have an embedding $\phi:\ GO(l-1,V)\to GO(r-1,V')$ with
$\varphi^*\theta'^{*}\mathcal{O}_{GO(r,V')}(1)\cong
\mathcal{O}_{GO(l-1,V)}(1)$, $\phi^*\mathcal{O}_{GO(r-1,V')}
(1)\cong\theta^*\mathcal{O}_{GO(l,V)}(1)$.

\end{example}

A linear embedding $\phi$ as in Definition \ref{lin emb 
isotr flags} induces a partition with $k+1$ parts 
$\{0,1,...,\tilde{k},\tilde{k}+1\}=I_0\sqcup I_1\sqcup I_2
\sqcup...\sqcup I_k$ such that $0\in I_0$ and $j\in I_0$ 
iff $\phi^*[M_j]=0$, respectively, $j\in I_i$ for $i\ge1$ 
iff $\phi^*[M_j]=[L_i]$. The map $j\mapsto i$, for $j\in I_i$, 
is a surjection which we denote by $p$. By definition, 
$p(0)=0$.

\begin{proposition}\label{extension to Grassm growth}
(i) Let $\varphi:\ Fl(m_1,...,m_k,V)\hookrightarrow Fl(n_1,...,
n_{\tilde{k}},V')$ be a linear embedding. Then $\phi$ induces a
collection of morphisms of grassmannians
\begin{equation*}\label{colln general}
\varphi_{[i]}=\{\varphi_{i,j}\}_{i=p(j)}:\ G(m_i,V)\to
\underset{j>0:p(j)=i}{\prod}G(n_j,V'),\ \ \ 0\le i\le k,
\end{equation*}
such that the diagram
\begin{equation}\label{diagram 1}
\xymatrix{
Fl(m_1,...,m_k,V)\ar@{^{(}->}[rrr]^-{\varphi}\ar@{^{(}->}[d]^-{
j} & & & Fl(n_1,...,n_{\tilde{k}},V')
\ar@{^{(}->}[d]^-{j'}\\
G_0\times G(m_1,V)\times...\times G(m_k,V) \ar@{^{(}->} 
[rrr]^-{\phi_{[1]}\times...\times\phi_{[k]}} & & & 
G(n_1,V')\times ...\times G(n_{\tilde{k}},V')}
\end{equation}
where $j$ and $j'$ are the natural embeddings, is commutative.
Here $G_0$ is a single point, and is present in the diagram if
and only if there are constant morphisms $\phi_{0=p(j),j}:
G_0\to G(n_j,V')$.\\
(ii) Similar statements hold in the orthogonal and symplectic 
cases.
\end{proposition}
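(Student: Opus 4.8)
The plan is to test the embedding $\varphi$ against each target projection separately. Write $X=Fl(m_1,\dots,m_k,V)$ and, for $1\le j\le\tilde k$, consider the composite morphism $f_j:=\pi'_j\circ\varphi:X\to G(n_j,V')$, where $\pi'_j$ is the $j$-th projection of the target flag variety. Since $\pi'^{*}_j\mathcal{O}_{G(n_j,V')}(1)=M_j$, we get $f_j^{*}\mathcal{O}_{G(n_j,V')}(1)=\varphi^{*}M_j$, which by Definition \ref{lin emb isotr flags} and the definition of $p$ is isomorphic to $\mathcal{O}_X$ when $p(j)=0$ and to $L_{p(j)}=\pi_{p(j)}^{*}\mathcal{O}_{G(m_{p(j)},V)}(1)$ when $p(j)\ge 1$. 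The entire proposition thus reduces to two factorization claims about a single morphism whose pullback of the Plücker bundle is pinned down: that $f_j$ is constant when $p(j)=0$, and that $f_j$ factors through $\pi_{p(j)}$ when $p(j)\ge 1$.

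The structural input I would establish first is that each projection $\pi_i:X\to G(m_i,V)$ is the morphism attached to the complete linear system $|L_i|$. The fiber of $\pi_i$ over a subspace $V_{m_i}$ is the product $Fl(m_1,\dots,m_{i-1},V_{m_i})\times Fl(m_{i+1}-m_i,\dots,m_k-m_i,V/V_{m_i})$, hence connected, rational and projective with $H^0(\mathcal{O})=\mathbb{C}$; combined with normality of $G(m_i,V)$ this yields $\pi_{i*}\mathcal{O}_X=\mathcal{O}_{G(m_i,V)}$. By the projection formula, pullback gives an isomorphism
\[
\pi_i^{*}:H^0\bigl(G(m_i,V),\mathcal{O}_{G(m_i,V)}(1)\bigr)\xrightarrow{\ \sim\ }H^0(X,L_i),
\]
so that every global section of $L_i$ descends through $\pi_i$. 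This descent of sections is exactly what converts a numerical coincidence of line bundles into a genuine factorization of morphisms.

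Now I would run the two cases. If $p(j)=0$, then $f_j^{*}\mathcal{O}(1)\cong\mathcal{O}_X$, and since $X$ is complete and connected the sections defining $f_j$ lie in $H^0(X,\mathcal{O}_X)=\mathbb{C}$, so $f_j$ is constant; viewing this constant as a morphism $\phi_{0,j}:G_0\to G(n_j,V')$ accounts for the point $G_0$ appearing precisely when some $j\ge 1$ has $p(j)=0$. If $p(j)=i\ge 1$, compose $f_j$ with the Plücker embedding $\iota:G(n_j,V')\hookrightarrow\mathbb{P}(\wedge^{n_j}V')$; the result is cut out by the base-point-free sections $s_\alpha=f_j^{*}x_\alpha\in H^0(X,L_i)$. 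By the displayed isomorphism each $s_\alpha=\pi_i^{*}\bar s_\alpha$ for a unique $\bar s_\alpha\in H^0(G(m_i,V),\mathcal{O}(1))$, and the $\bar s_\alpha$ are again base-point-free, since a common zero would pull back along the surjection $\pi_i$ to a common zero of the $s_\alpha$. Hence the $\bar s_\alpha$ define a morphism $g:G(m_i,V)\to\mathbb{P}(\wedge^{n_j}V')$ with $g\circ\pi_i=\iota\circ f_j$. Surjectivity of $\pi_i$ forces $g(G(m_i,V))\subseteq\iota(G(n_j,V'))$, and because $\iota$ is a closed immersion and $G(m_i,V)$ is reduced, $g$ factors through a morphism $\varphi_{i,j}:G(m_i,V)\to G(n_j,V')$ with $\pi'_j\circ\varphi=\varphi_{i,j}\circ\pi_{p(j)}$. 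Collecting the $\varphi_{i,j}$ over $\{j:p(j)=i\}$ produces $\varphi_{[i]}$, and these identities are precisely the commutativity of \eqref{diagram 1}.

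For part (ii) the same argument applies after replacing $G$ by $GO$ or $GS$ and $\mathcal{O}_G(1)$ by the preferred generators of the relevant Picard group. In the symplectic case and the generic orthogonal case nothing changes beyond notation, since the preferred generator is globally generated and $\pi_i$ is the associated morphism. The only genuine extra work sits in the exceptional orthogonal ranges $m_k=\frac{\dim V}{2}$ and $m_k=\frac{\dim V}{2}-1$, where the preferred generators are the ample generator $\mathcal{O}_{GO(m_i,V)}(1)$, respectively the $\theta$-relatively ample sheaf described before \eqref{flag ortho n-1,n}: here one must verify that the pertinent preferred generator is still globally generated and that $\pi_i$ (respectively $\theta\circ\pi_{k-1}$) is the morphism it defines, so that the descent-of-sections step goes through verbatim. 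I expect the descent step in the principal case to be the main obstacle, which is why establishing $\pi_{i*}\mathcal{O}_X=\mathcal{O}_{G(m_i,V)}$ — equivalently, that $|L_i|$ defines $\pi_i$ — is the decisive point; the isotropic analysis is then routine but case-laden.
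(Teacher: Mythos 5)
Your proof is correct, but it descends the morphisms by a genuinely different mechanism than the paper. The paper works pointwise on fibers: for a fiber $\pi_{p(j)}^{-1}(x)$ of the source projection, linearity gives $\varphi^*M_j|_{\pi_{p(j)}^{-1}(x)}\cong\mathcal{O}$, and since $M_j$ is the pullback along $\pi'_j$ of an \emph{ample} sheaf, the composite $\pi'_j\circ\varphi$ must be constant on each such fiber; the factorization is then supplied by a separate general lemma (Lemma \ref{lemma1}: a morphism constant on the fibers of a surjection onto a smooth target factors through it, proved via the graph together with the fact that a bijective morphism onto a smooth variety is an isomorphism, i.e.\ a Zariski-main-theorem-type input). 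You instead descend at the level of sections: $\pi_{i*}\mathcal{O}_X=\mathcal{O}_{G(m_i,V)}$ (connected fibers plus normal base) and the projection formula give $H^0(X,L_i)\cong H^0(G(m_i,V),\mathcal{O}_{G(m_i,V)}(1))$, so the base-point-free sections cutting out $\iota\circ f_j$ are pullbacks of base-point-free sections downstairs, and the factorization follows from functoriality of maps to projective space plus reducedness of the source to land in the closed subvariety $\iota(G(n_j,V'))$; the $p(j)=0$ case is absorbed uniformly via $H^0(X,\mathcal{O}_X)=\mathbb{C}$, where the paper instead argues constancy directly from ampleness. Your route buys uniformity --- you treat all $j$ and arbitrary $k,\tilde{k}$ in one stroke, whereas the paper carries out only $k=\tilde{k}=2$ and leaves the general case to the reader --- and it replaces ampleness and the smoothness/bijectivity lemma by the pushforward computation; the paper's route avoids any cohomological statement about $\pi_i$ but needs $Y$ smooth in Lemma \ref{lemma1}. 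On part (ii) you are on par with the paper, which also defers it to the reader; in fact your explicit list of what remains to be checked in the exceptional orthogonal ranges (global generation of $\mathcal{O}_{GO(m,V)}(1)$ for $m=\frac{\dim V}{2}$ and of the $\theta$-relatively ample generator for $m=\frac{\dim V}{2}-1$, and that the relevant projections are the associated morphisms --- both true, as these are homogeneous line bundles attached to dominant weights) is a more informative accounting than the paper gives, though you should note that for $p(j)=i\ge1$ your argument only ever uses the sublinear system spanned by the pulled-back Pl\"ucker (or spinor) coordinates, so no embedding property of the complete system is actually required.
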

In the proof, we will need the following.
\begin{lemma}\label{lemma1}
\textit{Let $X,\ Y,\ Z$ be projective varieties with
$Y$ smooth, and let $a:X\to Y$ and $b:X\to Z$ be morphisms
such that $a$ is surjective and $b$ is constant on the fibers 
of $a$. Then there exists  a morphism $f:Y\to Z$ such that 
$b=f\circ a$.} 
\end{lemma}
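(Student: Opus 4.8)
The plan is to realize the desired $f$ via the graph of the pair $(a,b)$ and then to identify this graph with $Y$ itself. First I would form the morphism $(a,b)\colon X\to Y\times Z$ and let $\Gamma\subset Y\times Z$ denote its image. Since $X$ is projective, $(a,b)$ is proper, so $\Gamma$ is a closed subvariety of $Y\times Z$ and $(a,b)$ corestricts to a surjective morphism $g\colon X\to\Gamma$. Writing $p\colon\Gamma\to Y$ and $q\colon\Gamma\to Z$ for the restrictions to $\Gamma$ of the two projections of $Y\times Z$, we have by construction $p\circ g=a$ and $q\circ g=b$. Once I know that $p$ is an isomorphism, the morphism $f:=q\circ p^{-1}\colon Y\to Z$ will satisfy $f\circ a=q\circ p^{-1}\circ p\circ g=q\circ g=b$, as required.

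The heart of the matter is therefore to prove that $p\colon\Gamma\to Y$ is an isomorphism, and I would begin by checking that $p$ is bijective on points. It is surjective because $a=p\circ g$ is surjective. For injectivity, note that set-theoretically $\Gamma=\{(a(x),b(x))\mid x\in X\}$; given $y\in Y$, surjectivity of $a$ provides some $x$ with $a(x)=y$, and since $b$ is constant on the fiber $a^{-1}(y)$, the second coordinate $b(x)$ does not depend on the choice of such $x$. Hence $p^{-1}(y)$ consists of the single point $(y,b(a^{-1}(y)))$, so $p$ is bijective.

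It remains to upgrade the set-theoretic bijection $p$ to an isomorphism of varieties, and here I would use that $Y$ is smooth, hence normal, together with the standing hypothesis that the base field is $\mathbb{C}$. Being a morphism of projective varieties, $p$ is proper, and its fibers are single points, so $p$ is quasi-finite; by Zariski's Main Theorem a proper quasi-finite morphism is finite. In characteristic $0$ a bijective morphism of irreducible varieties is birational, since the degree of $p$ equals the number of points of a general fiber, namely $1$, there being no inseparability. Thus $p$ is a finite birational morphism onto the normal variety $Y$, and such a morphism is necessarily an isomorphism; this supplies $p^{-1}$ and completes the construction of $f$.

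The main obstacle is precisely this last step: passing from a bijective morphism to an isomorphism is false in general, as shown by the normalization of a cuspidal curve or by Frobenius in positive characteristic. It is therefore essential to use both the normality of $Y$ and the characteristic-zero assumption. Finally, if $X$ or $Y$ fails to be irreducible, I would first decompose $Y$ into its connected components, which are disjoint and irreducible because $Y$ is smooth, and run the argument on the preimage under $a$ of each component separately.
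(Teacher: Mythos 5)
Your proposal is correct and follows essentially the same route as the paper: form the graph morphism $(a,b)\colon X\to Y\times Z$, show the first projection restricted to the image is bijective using surjectivity of $a$ and constancy of $b$ on its fibers, invoke smoothness of $Y$ to conclude this bijection is an isomorphism, and compose with the second projection. The only difference is that the paper simply cites Shafarevich for the fact that a bijective morphism onto a smooth variety in characteristic zero is an isomorphism, whereas you spell out that step via Zariski's Main Theorem (proper quasi-finite $\Rightarrow$ finite, bijective $\Rightarrow$ birational in characteristic $0$, finite birational onto normal $\Rightarrow$ isomorphism) --- a welcome elaboration, not a departure.
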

\begin{proof}
Consider the morphism $g:X\to Y\times 
Z,\ x\mapsto(a(x),b(x))$, and let $Y\xleftarrow{a'}Y\times Z
\xrightarrow{b'}Z$ be the projections onto factors  so that
$a=a'\circ g$ and $b=b'\circ g$. Since $b$ is constant on the
fibers of $p$, it follows that $\tilde{a}:=a'|_{g(X)}:
g(X)\to Y$ is a bijection. Therefore, as $Y$ is smooth, 
$\tilde{a}$ is an isomorphism (see, e.g., \cite[Ch.2, Section 
4.4, Thm. 2.16]{S}). The desired morphism $f$ is now the 
composition $f=b'\circ\tilde{a}^{-1}$.
\end{proof}

\textit{Proof of Proposition \ref{extension to Grassm growth}}.
(i) We consider the case $k=\tilde{k}=2$. For 
arbitrary $k,\ \tilde{k}$ the proof goes along the same lines,
and we leave the details to the reader. 
Set $[L_1]:=\phi^*[M_{j_1}],\ [L_2]:=\phi^*[M_{j_2}]$, and let 
$\pi_i:Fl(m_1,m_2,V)\to G(m_i,V),\ \pi'_i:Fl(n_1,n_2,V')\to 
G(n_i,V'),\ i=1,2,$ be the natural projections. For an 
arbitrary point $x=(x_1,x_2)=(V_{m_1},V_{m_2})\in Fl(m_1,m_2,V)
\subset G(m_1,V)\times G(m_2,V)$, consider the fibres 
$\pi_i^{-1}(x_i)\subset 
F,\ i=1,2,$ through the point $x$. Since $\varphi$ is a linear 
embedding, we have $M_{j_1}|_{\varphi
(\pi_1^{-1}(x_1))}\simeq\varphi^*M_{j_1}|_{\pi_1^{-1}(x_1)}
\simeq\mathcal{O}_{\pi_1^{-1}(x_1)}\simeq\mathcal{O}_{\varphi
(\pi_1^{-1}(x_1))}$. As $\varphi(\pi_1^{-1}(x_1))$ is an irreducible variety and $M_{j_1}={\pi'}_1^*
\mathcal{O}_{G(n_{j_1},V')}(1)$, where $\mathcal{O}_{G(n_{j_1},
V')}(1)$ is an ample sheaf, it follows from the above 
isomorphisms that $\pi'_{j_1}$ is constant on the 
variety $\varphi(\pi_1^{-1}(x_1))$. Equivalently, the 
morphism $\pi'_{j_1}\circ\varphi$ is constant on the fibres of 
the projection $\pi_1$.
%

Lemma \ref{lemma1} implies that $\pi'_1\circ\varphi$ 
factors through the projection $\pi_1$, i.e. there is a 
well-defined morphism 
\begin{equation}\label{phi1}
\varphi_1:\ G(m_1,V)\to G(n_{j_1},V'),\ 
x_1\mapsto \pi'_{j_1}(\varphi(\pi_1^{-1}(x_1)))
\end{equation}
such that $\varphi_1\circ \pi_1=\pi'_{j_1}\circ
\varphi$. In a similar way there is a well-defined
morphism
\begin{equation}\label{phi2}
\varphi_2:\ G(m_2,V)\to G(n_{j_2},V'),\ 
x_2\mapsto \pi'_{j_2}(\varphi(\pi_1^{-1}(x_2)))
\end{equation}
such that $\varphi_2\circ p_2=\pi'_{j_2}\circ
\varphi$. By construction, $\phi_1$ and $\phi_2$ are linear 
morphisms.  

Considering now $Fl(m_1,m_2,V)$ and $Fl(n_1,n_2,V')$ as lying, 
respectively, in $G(m_1,V)\times G(m_2,V)$ and in 
$G(n_1,V')\times G(n_2,V')$, for any points $x=(x_1,x_2)\in 
Fl(m_1,m_2,V)$ and $x'=(x'_1,x'_2)\in Fl(n_1,n_2,V')$ we have
$$
x=\pi_1^{-1}(x_1)\cap \pi_2^{-1}(x_2),\ \ \ 
x'={\pi'}_1^{-1}(x'_1)\cap {\pi'}_2^{-1}(x'_2).
$$ 
This together with (\ref{phi1}) and (\ref{phi2}) shows 
that, if $x'_{j_i}=\varphi_i(x_i),\ i=1,2$, then
$$
\varphi(x)=\varphi(\pi_1^{-1}(x_1))\cap\varphi(\pi_2^{-1}(x_2))\in
{\pi'}_{j_1}^{-1}(x'_{j_1})\cap{\pi'}_{j_2}^{-1}(x'_{j_2})=
(\phi_1\times\phi_2)(x),
$$
i.e. the diagram (\ref{diagram 1}) is commutative for $k=2$. 

We leave to the reader to make (ii) precise and check that the 
above proof extends to this case.  ~\hfill$\Box$

\vspace{1cm}
\section{Standard extensions of flag varieties}
\label{linear embed}
\vspace{5mm}

In this section we introduce and study a class of 
embeddings of flag varieties that we call standard 
extensions. In almost all cases, standard extensions are 
linear embeddings in the sense of Section \ref{preliminaries}.

We start by considering the case of grassmannians. Let
\begin{equation}\label{def of strict}
\varphi: G(m,V) \hookrightarrow G(n,V')
\end{equation}
be a regular morphism. Assume $\dim V'>\dim V$, $m\ne0$, $m\ne
\dim V$. We say that $\varphi$ is a \textit{strict standard 
extension} if there exists an isomorphism of vector spaces 
$V'=V\oplus\widehat{W}$ and a subspace $W\subset\widehat{W}$, 
such that 
\begin{equation*}\label{def of strict2}
\varphi(V_m)=V_m\oplus W
\end{equation*} 
where $V_m\subset V$ is an arbitrary point of $G(m,V)$. If 
$m=0$ or $m=\dim V$, a morphism \eqref{def of strict} is
necessarily constant and we call it a \textit{constant strict 
standard extension}. In this case we set $W:=\phi(G(m,V))$.

It is easy to check that a nonconstant strict standard  
extension is a linear embedding. 

By a \textit{modified standard extension} we understand an 
embedding \eqref{def of strict} for which there exists a 
strict standard extension
$$
\varphi':G(m,V)\hookrightarrow G(\dim V'-n,V'^{\vee})
$$
such that $\varphi=d\circ\varphi'$ where
$$
d:G(\dim V'-n,V'^{\vee})\xrightarrow{\sim}G(n,V')
$$
is the duality isomorphism. In what follows, a \textit{standard
extension} will mean a strict standard extension or a modified
standard extension.    

Note that if a morphism \eqref{def of strict} is linear, it is
not necessarily a standard extension. For instance, the reader 
can prove that the Pl\"ucker embedding 
$$
\psi:\ G(m,V)\hookrightarrow G(1,\wedge^mV)=\mathbb{P}
(\wedge^m V)
$$
is a standard extension if and only if $m=1$ or $m=\dim V-1$.
On the other hand, the Pl\"ucker embedding is of course a 
linear embedding.

In the case of orthogonal and symplectic grassmannians, a 
strict standard extension is defined in the same way with the 
additional requirement that the decomposition $V'=V\oplus U$
be orthogonal and that the spaces $V_m$ and $W$ are isotropic.
In these cases there is no need to consider modified standard
extensions (as the spaces $V$ and $V^{\vee}$ are identified via
the respective non-degenerate form), and the terms strict 
standard extension and standard extension are synonyms.

Here is a definition of strict standard extension $\varphi$
of grassmannians which refers only to the data of linear 
algebra which can be recovered canonically from the embedding
$\varphi$.

\begin{definition}\label{strict}
Let $\dim V'>\dim V$.
A morphism of grassmannians $\varphi:G(m,V)\hookrightarrow 
G(n,V')$ is said to be a \textit{strict standard extension} 
if either $G(m,V)$ is a point (i.e. $m=0$ or $m=\dim V$, and 
$\phi$ is constant) or there exists a subspace $U\subset V'$ 
and a surjective linear operator $\varepsilon:\ 
U\twoheadrightarrow V$ such that
\begin{equation}\label{eta, eps}
\varphi(V_m)=\varepsilon^{-1}(V_m).
\end{equation} 
\end{definition} 
\noindent

If $\phi$ is a nonconstant standard extension, the subspace 
$U\subset V'$ is unique and the linear operator $\varepsilon:U
\to V$ is unique up to a scalar multiple. Indeed, assume 
$\phi$ is given and set 
\begin{equation}\label{descriptn  of W}
W:=\underset{V_m\subset V}{\bigcap} \varphi(V_m).
\end{equation} 
Let $S$ and $S'$ denote respectively
the tautological bundles on $G(m,V)$ and $G(n,V')$. There is an
obvious exact sequence
$$
0\to W\otimes\mathcal{O}_{G(m,V)}\to\phi^*S'\to S\to0.
$$
Dualization yields an injective homomorphism $V^{\vee}=
H^0(G(m,V),S^{\vee})\hookrightarrow H^0(G(m,V),(\phi^*S')
^{\vee})$ with cokernel equal $W^{\vee}$. Set $U^{\vee}=
H^0(G(m,V),(\phi^*S')^{\vee})$. Then a second dualization 
yields a surjective homomorphism $\varepsilon:U\to V$ with 
$\ker\varepsilon=W$. In particular,
\begin{equation}\label{descriptn  of U}
U=\underset{V_m\subset V}{\bigcup} \varphi(V_m).
\end{equation}

In what follows, we will assign a subspace $U\subset V'$ also 
in the case when $\phi$ is constant: we set $U=W:=\phi(G(m,V))
\in G(n,V')$ and $\varepsilon=0$. Formulas \eqref{eta, eps} and
\eqref{descriptn  of U} then hold in this case too. 

It is easy to show that Definition 
\ref{strict} is equivalent to the above "naive" definition of 
strict standard extension. Let $\phi$ be a nonconstant strict 
standard extension according to Definition \ref{strict}.  
Then $U$ and $\varepsilon:U\to V$ are given, and we can choose 
a splitting $U\simeq V\oplus(W=\ker
\varepsilon)$. In particular, this induces an embedding $V$ 
into $V'$. We then extend the splitting $U\simeq V\oplus W$ to
a splitting $V'=V\oplus\widehat{W}$ where $W\subset
\widehat{W}$. This yields the datum of "naive" definition.
Conversely, given a nonconstant strict standard extension as
in the "naive" definition, we simply set $U:=V\oplus W$ and 
define $\varepsilon$ to be the projection $U\to V$. Finally, 
if $\phi$ is constant then we put $U:=\phi(G(m,V))=W$ (here $\dim U=n$). 

In the orthogonal and symplectic cases, in Definition 
\ref{strict} one must assume that the space $W$ is
isotropic and the isomorphism $U/W\xrightarrow{\sim}V$ induced 
by the operator $\varepsilon:U\twoheadrightarrow V$ is an
isomorphism of spaces endowed with symmetric, or respectively 
symplectic, forms. Here the form on $U$ is induced by the 
respective form on $V'$. 

It is a straightforward observation that in all cases the 
composition of standard extensions of grassmannians is also a
standard extension. The composition of two strict standard 
extensions or two modified standard extensions is a strict
standard extension, while the composition of a strict standard 
extension and a modified standard extension is again a 
modified standard extension.

We now give the definition of a strict standard extension of 
usual and isotropic flag varieties.

\begin{definition}\label{str st ext flags}
An embedding of flag varieties 
$\varphi:\ Fl(m_1,...,m_k,V)
\hookrightarrow Fl(n_1,...,n_{\tilde{k}},V')$, 
respectively, $\varphi:\ FlO(m_1,...,m_k,V)
\hookrightarrow FlO(n_1,...,n_{\tilde{k}},V')$, 
respectively, $\varphi:\ FlS(m_1,...,m_k,V)
\hookrightarrow FlS(n_1,...,n_{\tilde{k}},V')$,
is said to be a \textit{strict standard extension}, or simply 
a \textit{standard extension} in the orthogonal and symplectic 
cases, if there exists a flag of distinct nonzero subspaces of 
$V'$,
$$
U_1\subset U_2\subset...\subset U_{\tilde{k}}
$$
such that in the orthogonal and symplectic cases the spaces 
$U_i$ are nondegenerate, and a commutative diagram
\begin{equation}\label{Ui,epsilon i}
\xymatrix{
V \ar@{=}[r]& V \ar@{=}[r]& .\ .\ . \ar@{=}[r]& V\\
U_1\ar[u]^-{\varepsilon_1} \ar@{^{(}->}[r] & 
U_2\ar[u]^-{\varepsilon_2} \ar@{^{(}->}[r] & .\ .\ .  \ar@{^{(}->}[r]  & 
U_{\tilde{k}} \ar[u]^-{\varepsilon_{\tilde{k}}} } 
\end{equation}
of linear operators $\varepsilon_i:U_i\to V$,
surjective whenever nonzero, compatible with the respective
forms on $U_i$ and $V$ and having isotropic kernels in the
orthogonal and symplectic cases, and such that
\begin{equation}\label{phi(...)}
\begin{split}
& \varphi\big(0=V_{\overline{p}(0)}\subset V_{\overline{p}(1)}\subset...\subset V_{\overline{p}(\tilde{k})
}\subset V_{\overline{p}(\tilde{k}+1)}=V\big)=\\
& \big(0\subset\varepsilon_1^{-1}(V_{\overline{p}(1)})\subset
\varepsilon_1^{-1}(V_{\overline{p}(2)})\subset...\subset
\varepsilon_{\tilde{k}}^{-1}(V_{\overline{p}(\tilde{k})})
\subset V'\big)
\end{split}
\end{equation} 
for a suitable surjective map $\overline{p}:\{0,1,...,\tilde{k}
,\tilde{k}+1\}\to\{0,1,...,k,k+1\}$ satisfying $\overline{p}(i)
\le \overline{p}(j)$ whenever $i<j$. 

Note that $\overline{p}(0)=0,\ \overline{p}(\tilde{k}+1)=k+1$ 
and that there are exactly $k$ distinct proper nonzero 
subspaces among $V_{\overline{p}(1)},...,V_{\overline{p}
(\tilde{k})}$. Moreover, the surjection $p:\{0,1,...,\tilde{k}
\}\to\{0,1,...,k\}$ satisfies $p(j)=\overline{p}(j)$ whenever 
$p(j)\ne0$ and $p^{-1}(0)\cup\{\tilde{k}\}=\overline{p}^{-1}
(0)\sqcup\overline{p}^{-1}(k+1)$.

A strict standard extension is a linear embedding, except  in the case
$$
FlO(m_1,...,m_k,V)\hookrightarrow FlO(n_1,...,n_{\tilde{k}},V')
$$
where $\frac{\dim V}{2}-1$ appeas among $m_1,...,m_k$ but 
$\frac{\dim V'}{2}-1$ does not appear among $n_1,...,
n_{\tilde{k}}$, or $\frac{\dim V}{2}$ appears among $m_1,...,
m_k$ but $\frac{\dim V'}{2}-1$ or $\frac{\dim V'}{2}$ does not 
appear among $n_1,...,n_{\tilde{k}}$
. 

\end{definition}

Of course, in the case of ordinary (i.e. not isotropic) flag
varieties, we also need the definition of a \textit{modified 
standard extension}. By definition, this is a composition
$\phi=d\circ\phi'$ where
\begin{equation*}\label{phi modified}
\phi':\ Fl(m_1,...,m_k,V)\hookrightarrow Fl(\dim V'-
n_{\tilde{k}},..., \dim V'-n_1,V'^{\vee})
\end{equation*}
is a strict standard extension and
\begin{equation*}\label{duality}
d:\ Fl(\dim V'-n_{\tilde{k}},...,\dim V'-n_1,V'^{\vee})
\xrightarrow{\simeq} Fl(n_1,...,n_{\tilde{k}},V')
\end{equation*}
is the duality isomorphism. Here $\phi^*[M_j]=[L_{q(j)}]$ for
a map $q:\{0,1,...,\tilde{k}\}\to\{0,1,...,k\}$ such that $q(0)
=0$, $q(i)\ge q(j)$ whenever $q(i)\ne0,\ q(j)\ne0$ and $i\le 
j$, and also $q(j)=0$ implies $j<t$ or $j>t$ for all $t$ with 
$q(t)\ne0$.

\begin{example}\label{example 3.3}
(i) Consider the extreme case when $k=1$ and $\tilde{k}$ is an 
arbitrary integer greater or equal to 1. Then the surjection 
$\overline{p}:\{0,1,...,\tilde{k},\tilde{k}+1\}\to\{0,1,2\}$ 
from Definition \ref{str st ext flags},(ii)
defines an ordered partition of $\{0,1,...,\tilde{k},
\tilde{k}+1\}$ with three parts $\overline{p}^{-1}(0)$, 
$\overline{p}^{-1}(1)$, $\overline{p}^{-1}(2)$, and a 
corresponding standard extension 
$$
G(m,V)\hookrightarrow Fl(m_1,...,m_{\tilde{k}},V')
$$
has the form
$$
(0\subset V_m\subset V)\mapsto(0\subset W_1\subset...\subset 
W_s\subset\varepsilon_{s+1}^{-1}(V_m)\subset...\subset
\varepsilon_t^{-1}(V_m)\subset U_{t+1}\subset...\subset 
U_{\tilde{k}}\subset V'),
$$
where $\{0,1,...,s\}=\overline{p}^{-1}(0)$, $\{s+1,...,t\}=
\overline{p}^{-1}(1)$ and $\{t+1,...,\tilde{k}+1\}=
\overline{p}^{-1}(2)$. 
 
(ii) Next, consider the case when $\dim V'=\dim V+1$. Then
$\tilde{k}$ necessarily equals $k$ or $k+1$. Hence, $\dim 
W_i\le1$ and there exists $i_0,\ 0\le i_0\le k,$ such that 
$W_j=0$ for $j\le i_0$ and $\dim W_{i_0+1}=...=\dim 
W_{\tilde{k}}=1$. Consequently, $W_{i_0+1}=...=
W_{\tilde{k}}$. Set $W:=W_{i_0+1}=...=W_{\tilde{k}}$. If 
$\tilde{k}=k$, then $p$ is a bijection and the corresponding 
standard extension $\phi:\ Fl(m_1,...,m_k,V)\hookrightarrow 
Fl(n_1,...,n_k,V')$ has the form
\begin{equation}\label{example of st ext}
\begin{split}
&\ \ \ \ \ \varphi(0\subset V_{m_1}\subset...\subset 
V_{m_k}\subset V)=\\
& =\left\{ 
\begin{aligned}
(0\subset V_{m_1}\oplus W\subset...\subset V_{m_k}\oplus W\subset V')\ \ \ \ \ \ \ \ \ \ \ \ \ \ \ \ \ \ \ \ \ \ \ \ \ \
\ \ \ & &\ for\ i_0=0,\\ 
(0\subset V_{m_1}\subset...\subset V_{m_{i_0}}\subset V_{m
_{i_0+1}}\oplus W\subset...\subset V_{m_k}\oplus W\subset V')
 & &\ for\ 0<i_0<k,\\ 
(0\subset V_{m_1}\subset...\subset V_{m_k}\subset V')
\ \ \ \ \ \ \ \ \ \ \ \ \ \ \ \ \ \ \ \ \ \ \ \ \ \ \ \ \ \ \ \ \ \ \ \ \ \ \ \ \ \  & &
\ for\ i_0=k.\\ 
\end{aligned} 
\right.
\end{split}
\end{equation}
If $\tilde{k}=k+1$, then $p(i_0)=p(i_0+1)=i_0$ and the 
standard extension $\varphi:\ Fl(m_1,...,m_k,V)
\hookrightarrow Fl(n_1,...,n_{k+1},V')$ has the form
\begin{equation}\label{example of st ext2}
\begin{split}
&\ \ \ \ \ \varphi(0\subset V_{m_1}\subset...\subset 
V_{m_k}\subset V)=\\
& =\left\{ 
\begin{aligned}
(0\subset W\subset V_{m_1}\oplus W\subset...\subset V_{m_k}\oplus W\subset V')\ \ \ \ \ \ \ \ \ \ \ \ \ \ \ \ \ \ \ \ \ \ \ \ \  & &\ for\ i_0=0,\\ 
(0\subset V_{m_1}\subset...\subset V_{m_{i_0}}\subset V_{m
_{i_0+1}}\oplus W\subset...\subset V_{m_k}\oplus W\subset V')
\ \ \ \  & &\ for\ 0<i_0<k,\\ 
(0\subset V_{m_1}\subset...\subset V_{m_k} \subset V_{m_k}\oplus W\subset V')
\ \ \ \ \ \ \ \ \ \ \ \ \ \ \ \ \ \ \ \ \ \ \ \ \ \ \ \ \ \  & &
\ for\ i_0=k.\\ 
\end{aligned} 
\right.
\end{split}
\end{equation}

(iii) Let $\dim V=2$ and let $V'=V\oplus V$. Consider the 
embedding
$$
\mathbb{P}(V)=G(1,V)\hookrightarrow Fl(1,2,3,V\oplus V),\ \ \ 
\ \ \ (0\subset V_1\subset V)\mapsto(0\subset V_1\subset 
V\oplus0\subset V\oplus V_1\subset V\oplus V).
$$
This embedding is not a standard extension. Here, $\varphi^*[
M_1]=\varphi^*[M_3]=[L],\ \varphi^*[M_2]=0.$ This shows that 
there is no $p$ as in the definition of strict standard 
extension, and it is easy to check that $\phi$ is also not a 
modified standard extension.
\end{example}

(iv) Let $V'$ be endowed with non-degenerate symmetric or
symplectic form, and $V'=V\oplus\widehat{W}$ where $\widehat{W}
=V^{\bot}$ and $\dim\widehat{W}=2$. Fix an isotropic line $W
\subset\widehat{W}$. Then for any increasing sequence $0<m_1<
...<m_k\le[\frac{\dim V}{2}]$ and any $s$, $1\le s\le k$, 
there is a standard extension $\phi:X\to Y$, where $X=FlO(m_1,
...,m_k,V)$ and $Y=FlO(m_1,...,m_s,m_s+1,...,m_k+1,V')$, or 
respectively, $X=FlS(m_1,...,m_k,V)$ and $Y=FlS(m_1,...,m_s,
m_s+1,...,m_k+1,V')$. For $s=0$ there also is a standard 
extension $\phi:X\to Y$, where now $Y=FlO(1,m_1+1,...,m_k+1,V'
)$ or $Y=FlS(1,m_1+1,...,m_k+1,V')$, respectively. The 
embedding $\phi$ is given by formula \eqref{example of st ext2}
with $i_0$ substituted by $s$.

\vspace{3mm}
A less canonical, but more intuitive, description of strict 
standard extensions (respectively, of standard extensions in 
the isotropic case) is given by the following easily proved 
proposition.
\begin{proposition}\label{descrn of st ext}
Assume that $\varphi:\ Fl(m_1,...,m_k,V)\hookrightarrow 
Fl(n_1,...,n_{\tilde{k}},V')$, respectively, $\varphi:\ 
FlO(m_1,...,m_k,V)\hookrightarrow FlO(n_1,...,n_{\tilde{k}},
V')$, respectively, $\varphi:\ FlS(m_1,...,m_k,V)
\hookrightarrow FlS(n_1,...,n_{\tilde{k}},V')$ is a 
nonconstant strict standard extension corresponding to a 
surjection $\overline{p}:\{0,1,...,\tilde{k},\tilde{k}+1\}\to
\{0,1,...,k,k+1\}$. Define the flag $(0\subset W_1\subset...
\subset W_{\tilde{k}}\subset V')$ by setting $W_i:=\ker
\varepsilon_i$. Then there exists a direct sum decomposition 
\begin{equation}\label{direct sum with W}
V'=V\oplus \widehat{W}
\end{equation}
with $\widehat{W}=V^{\bot}$ in the orthogonal and symplectic 
case, and such that $W_i\subset \widehat{W}$, $U_i\supset V$ 
for all $i$ with $\varepsilon_i\ne0$, and the nonzero 
operators $\varepsilon_i:U_i\to V$ are just projections onto 
$V$ via the decomposition \eqref{direct sum with W}. Moreover,
\begin{equation}\label{phi(...)1}
\varphi\big(0\subset V_{\overline{p}(1)}\subset...\subset 
V_{\overline{p}(\tilde{k})}\subset V\big)=
\big(0\subset V_{\overline{p}(1)}\oplus W_1\subset...\subset 
V_{\overline{p}(\tilde{k})}\oplus W_{\tilde{k}}\subset V'\big).
\end{equation} 
\end{proposition}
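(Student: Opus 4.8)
The plan is to carry out the general linear case in detail and then indicate the parallel modifications for the orthogonal and symplectic cases. I work with the given data — the flag $U_1\subset\dots\subset U_{\tilde{k}}$, the operators $\varepsilon_i\colon U_i\to V$ from the commutative diagram \eqref{Ui,epsilon i} (so $\varepsilon_{i+1}|_{U_i}=\varepsilon_i$), and the surjection $\overline{p}$ — and set $W_i:=\ker\varepsilon_i$. I would first record that vanishing propagates downward: $\varepsilon_i=0$ forces $\varepsilon_j=\varepsilon_i|_{U_j}=0$ for all $j\le i$. Since $\varphi$ is nonconstant, the $\varepsilon_i$ cannot all vanish (else the right-hand side of \eqref{phi(...)} would not depend on the input flag), so $\varepsilon_{\tilde{k}}\ne0$ and there is a smallest index $i_1$ with $\varepsilon_{i_1}\ne0$; thus $\varepsilon_i=0$ precisely for $i<i_1$.

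The main step is to produce a single copy of $V$ inside $V'$ compatible with all $\varepsilon_i$ at once. As $\varepsilon_{i_1}\colon U_{i_1}\twoheadrightarrow V$ is surjective, I would pick a linear section and let $\widetilde{V}\subset U_{i_1}$ be its image, so $U_{i_1}=\widetilde{V}\oplus W_{i_1}$ and $\varepsilon_{i_1}|_{\widetilde{V}}\colon\widetilde{V}\xrightarrow{\sim}V$. For every $i\ge i_1$ one has $\widetilde{V}\subset U_{i_1}\subset U_i$ and $\varepsilon_i|_{\widetilde{V}}=\varepsilon_{i_1}|_{\widetilde{V}}$ is an isomorphism onto $V$; surjectivity of $\varepsilon_i$ and a dimension count then give $U_i=\widetilde{V}\oplus W_i$, with $\varepsilon_i$ the projection onto $\widetilde{V}$ along $W_i$. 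Identifying $\widetilde{V}$ with $V$ via $\varepsilon_{i_1}$ turns this into $U_i=V\oplus W_i$ and exhibits each nonzero $\varepsilon_i$ as the projection onto $V$. To pass from $U_{\tilde{k}}$ to all of $V'$, I would extend $W_{\tilde{k}}$ to a complement $\widehat{W}:=W_{\tilde{k}}\oplus C$ of $V$, where $C$ is any complement of $U_{\tilde{k}}$ in $V'$; then $V'=V\oplus\widehat{W}$, and $W_i\subset W_{\tilde{k}}\subset\widehat{W}$ for all $i$ — clear for $i\ge i_1$, while for $i<i_1$ one has $W_i=U_i\subset\ker\varepsilon_{\tilde{k}}=W_{\tilde{k}}$ because $\varepsilon_{\tilde{k}}|_{U_i}=\varepsilon_i=0$. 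This establishes the asserted decomposition.

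It then remains to verify \eqref{phi(...)1}, and the one genuine point — the step I expect to be most delicate — is the claim that $\varepsilon_i=0$ implies $\overline{p}(i)=0$; without it the case $\varepsilon_i=0,\ \overline{p}(i)=k+1$ would give the wrong dimension on the right of \eqref{phi(...)1}. I would argue this as follows. If $\overline{p}(i)=r$ with $1\le r\le k$, then by the relation $p^{-1}(0)\cup\{\tilde{k}\}=\overline{p}^{-1}(0)\sqcup\overline{p}^{-1}(k+1)$ recorded after Definition \ref{str st ext flags} we get $p(i)=\overline{p}(i)=r\ne0$, so $\varphi^*[M_i]=[L_r]\ne0$, hence $\pi'_i\circ\varphi$ is nonconstant and $\varepsilon_i\ne0$. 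As $\overline{p}$ is order-preserving and surjective onto $\{0,\dots,k+1\}$ with $k\ge1$, the value $1$ is attained at some $j$ with $j<i$ whenever $\overline{p}(i)=k+1$; then $\varepsilon_j\ne0$ and, by upward propagation, $\varepsilon_i\ne0$. Hence $\varepsilon_i=0$ leaves only $\overline{p}(i)=0$. Now I check \eqref{phi(...)1} term by term: for $\varepsilon_i\ne0$ the operator is the projection onto $V$ along $W_i$, so $\varepsilon_i^{-1}(V_{\overline{p}(i)})=V_{\overline{p}(i)}\oplus W_i$ for every $V_{\overline{p}(i)}\subset V$ (in particular $W_i$ when $\overline{p}(i)=0$ and $U_i=V\oplus W_i$ when $\overline{p}(i)=k+1$); and for $\varepsilon_i=0$ we have $\overline{p}(i)=0$, whence $\varepsilon_i^{-1}(0)=U_i=W_i=V_{\overline{p}(i)}\oplus W_i$. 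Comparison with \eqref{phi(...)} yields \eqref{phi(...)1}.

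For the orthogonal and symplectic cases I would run the same scheme with the splittings chosen orthogonally: take $\widehat{W}=V^{\bot}$ and use the form-compatibility of the nonzero $\varepsilon_i$ with isotropic kernels, exactly as in the isotropic grassmannian discussion preceding Definition \ref{str st ext flags}, to ensure that $\widetilde{V}$ is nondegenerate and isometric to $V$ and that each $W_i\subset V^{\bot}$ is isotropic. The termwise verification of \eqref{phi(...)1} is then identical, and I would leave these routine form-theoretic checks to the reader, as the paper does in the analogous earlier arguments.
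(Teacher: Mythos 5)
Your proof is correct, and there is nothing in the paper to compare it against step by step: the authors introduce this statement as an ``easily proved proposition'' and give no argument, so your write-up supplies precisely the verification they leave implicit. Your construction — take the smallest index $i_1$ with $\varepsilon_{i_1}\neq 0$ (nonconstancy of $\varphi$ plus the downward propagation $\varepsilon_{i+1}|_{U_i}=\varepsilon_i$ from diagram \eqref{Ui,epsilon i} guarantee it exists), choose a section $\widetilde{V}\subset U_{i_1}$ of $\varepsilon_{i_1}$, propagate the splittings $U_i=\widetilde{V}\oplus W_i$ up the chain, and enlarge $W_{\tilde{k}}$ by a complement $C$ of $U_{\tilde{k}}$ to get $\widehat{W}$ — is the natural one, and each step checks out, including the inclusion $W_i=U_i\subset W_{\tilde{k}}$ for $i<i_1$. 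You have also correctly isolated the one genuinely delicate point: the normalization $\varepsilon_i=0\Rightarrow\overline{p}(i)=0$. This is not a cosmetic issue. The bare formula \eqref{phi(...)} admits data violating it — e.g.\ $k=1$, $\tilde{k}=2$, $\overline{p}(1)=\overline{p}(2)=1$, $\varepsilon_1=0$, $U_1\subset W_2$, which represents the injective map $V_{m_1}\mapsto(U_1\subset V_{m_1}\oplus W_2)$ — and for such data \eqref{phi(...)1} is literally false at position $1$, even though the same embedding also has a normalized representation with $\overline{p}(1)=0$. In particular, the normalization cannot be deduced from injectivity of $\varphi$ alone; it genuinely rests on the relation $p^{-1}(0)\cup\{\tilde{k}\}=\overline{p}^{-1}(0)\sqcup\overline{p}^{-1}(k+1)$ recorded after Definition \ref{str st ext flags}, exactly as you use it (together with order-preservation and surjectivity of $\overline{p}$ to dispose of the case $\overline{p}(i)=k+1$). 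Be aware that this relation is itself asserted in the paper without proof, and as printed it appears to contain a typo ($\tilde{k}$ where $\tilde{k}+1$ seems intended), so it functions in effect as an implicit constraint on which triples $(U_i,\varepsilon_i,\overline{p})$ are deemed to ``correspond'' to $\varphi$; your proof makes this dependence explicit, which is a virtue rather than a gap. Your sketch of the orthogonal and symplectic cases is at the paper's own level of detail; the one fact worth recording there is that $W_i=\ker\varepsilon_i$ is forced to be the radical of $U_i$ (since $U_i/W_i\cong V$ is nondegenerate), whence any complement $\widetilde{V}$ of $W_{i_1}$ in $U_{i_1}$ is automatically nondegenerate and isometric to $V$, and all $W_i\subset\widetilde{V}^{\bot}$, giving $\widehat{W}=V^{\bot}$ as required.
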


\begin{lemma}\label{Lemma 3.7}
In the notation of Proposition \ref{descrn of st ext}, let 
$\underline{w}$ be a basis of $\widehat{W}$ such that all 
subspaces $W_i$ are coordinate subspaces with respect to 
$\underline{w}$. Then, for any splitting $\widehat{W}=
\overline{W}\oplus\overline{\overline{W}}$ such that 
$\overline{W}$ and $\overline{\overline{W}}$ are
coordinate spaces, mutually perpendicular within $\widehat{W}$
in the orthogonal and symplectic cases, the strict standard 
extension given by formula \eqref{example of st ext2} is the 
composition of strict standard extensions
$$
Fl(m_1,...,m_k,V)\hookrightarrow Fl(m'_1,...,m'_l,V\oplus
\overline{W})\hookrightarrow Fl(n_1,...,n_{\tilde{k}},V'=
(V\oplus\overline{W})\oplus\overline{\overline{W}})
$$
for which the corresponding flags in $\overline{W}$ and 
$\overline{\overline{W}}$ are the respective intersections of 
the flag $(0\subset W_1\subset...\subset W_{\tilde{k}}
\subset W)$ with $\overline{W}$ and $\overline{\overline{W}}$.
\end{lemma}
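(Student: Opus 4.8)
The plan is to reduce the whole statement to the elementary fact that coordinate subspaces split compatibly along any coordinate direct-sum decomposition, and then to read the factorization straight off the explicit formula \eqref{phi(...)1} of \refprop{descrn of st ext}. First I would fix notation: writing $\widehat{W}=\overline{W}\oplus\overline{\overline{W}}$ as in the statement, I set $\overline{W}_i:=W_i\cap\overline{W}$ and $\overline{\overline{W}}_i:=W_i\cap\overline{\overline{W}}$ for $1\le i\le\tilde{k}$. Because $\underline{w}$ is a basis for which $W_i$, $\overline{W}$, $\overline{\overline{W}}$ are all coordinate subspaces, $W_i$ is spanned by a subset $S_i\subset\underline{w}$ while $\overline{W}$, $\overline{\overline{W}}$ are spanned by complementary subsets $T$, $\underline{w}\setminus T$; hence
\begin{equation*}
\overline{W}_i=\Span(S_i\cap T),\qquad\overline{\overline{W}}_i=\Span(S_i\setminus T),\qquad W_i=\overline{W}_i\oplus\overline{\overline{W}}_i.
\end{equation*}
By construction the chains $(\overline{W}_i)_i$ and $(\overline{\overline{W}}_i)_i$ are exactly the intersections of $(W_i)_i$ with $\overline{W}$ and with $\overline{\overline{W}}$, which is the identification of the two auxiliary flags demanded in the statement.

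Next I would define the two factors through their naive form and exhibit the Definition~\ref{str st ext flags} data of each directly. The first factor $\psi_1\colon Fl(m_1,\dots,m_k,V)\hookrightarrow Fl(m'_1,\dots,m'_l,V\oplus\overline{W})$ sends the flag with $i$-th term $V_{\overline{p}(i)}$ to the one with $i$-th term $V_{\overline{p}(i)}\oplus\overline{W}_i$, and the second factor $\psi_2\colon Fl(m'_1,\dots,m'_l,V\oplus\overline{W})\hookrightarrow Fl(n_1,\dots,n_{\tilde{k}},V')$ adjoins $\overline{\overline{W}}_i$ to the $i$-th term. With respect to the decompositions $V\oplus\overline{W}$ and $(V\oplus\overline{W})\oplus\overline{\overline{W}}=V'$, each of $\psi_1,\psi_2$ is visibly of the shape \eqref{phi(...)1}, the relevant $U$-flag being $V\oplus\overline{W}_i$ respectively $(V\oplus\overline{W})\oplus\overline{\overline{W}}_i$ with the corresponding projections as the operators $\varepsilon_i$; so both are strict standard extensions as in \refprop{descrn of st ext}. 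Composing and using $W_i=\overline{W}_i\oplus\overline{\overline{W}}_i$ gives
\begin{equation*}
(V_{\overline{p}(i)}\oplus\overline{W}_i)\oplus\overline{\overline{W}}_i=V_{\overline{p}(i)}\oplus W_i,
\end{equation*}
so $\psi_2\circ\psi_1$ carries each flag to the one with $i$-th term $V_{\overline{p}(i)}\oplus W_i$, which by \eqref{phi(...)1} is precisely $\varphi$.

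In the orthogonal and symplectic cases I would then check the form conditions. Since $\widehat{W}=V^{\bot}$ is non-degenerate and $\overline{W}\bot\overline{\overline{W}}$, the form restricts non-degenerately to each summand, so $V\oplus\overline{W}$ is non-degenerate with $\overline{W}=V^{\bot}$ inside it and $\overline{\overline{W}}=(V\oplus\overline{W})^{\bot}$ inside $V'$; this is exactly the data needed to apply \refprop{descrn of st ext} to $\psi_1$ and $\psi_2$ separately. The kernels $\overline{W}_i,\overline{\overline{W}}_i$ stay isotropic as subspaces of the isotropic $W_i$, the subspaces $V_{\overline{p}(i)}\oplus\overline{W}_i$ are isotropic in $V\oplus\overline{W}$, and compatibility of the $\varepsilon_i$ with the forms is automatic since they are orthogonal projections.

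I expect the only delicate point to be the combinatorial bookkeeping of types: I must verify that collapsing the repeated terms in the two intermediate chains produces genuine flags of types $(m'_1,\dots,m'_l)$ and $(n_1,\dots,n_{\tilde{k}})$ whose associated surjections $\overline{p}'$ and $\overline{p}''$ satisfy $\overline{p}=\overline{p}''\circ\overline{p}'$, so that Definition~\ref{str st ext flags} applies verbatim to each factor. Everything else is a direct unwinding of the definitions together with the coordinate splitting $W_i=\overline{W}_i\oplus\overline{\overline{W}}_i$.
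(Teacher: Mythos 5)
Your proposal is correct and is exactly the paper's argument: the paper's proof consists of the single line ``Direct verification using formula \eqref{phi(...)1}'', and your splitting $W_i=(W_i\cap\overline{W})\oplus(W_i\cap\overline{\overline{W}})$ via the common coordinate basis, followed by reading off the two factors and the form conditions from \eqref{phi(...)1}, is precisely that verification carried out in detail. The only point to watch is the direction of the composed surjections (with your naming it should be $\overline{p}=\overline{p}'\circ\overline{p}''$, since the surjection attached to an extension maps indices of the \emph{longer} flag to those of the shorter one), a purely notational matter.
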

\begin{proof}
Direct verification  using formula \eqref{phi(...)1}.
\end{proof}

\vspace{1cm}
\section{A sufficient condition for a linear embedding to be a 
standard extension}\label{more linear embed}
\vspace{5mm}

In this section we establish our main result concerning linear 
embeddings of flag varieties. This is a sufficient condition 
for a linear embedding to be a standard extension.

Consider a flag variety $Fl(m_1,...,m_k,V)$ and let $\{m_1,
...,m_k\}=R_1\cup...\cup R_s$ be a decomposition into a union 
of $s$ subsets. Denote this decomposition by $R$. By ordering 
the elements of $R_i$ we can think of $R_i$ as a type of a 
flag, and then $Fl(R_i,V)$ is a well-defined flag variety. 
Moreover, there is a canonical embedding 
$$
\psi_{R,t_1,...,t_s}:\ Fl(m_1,...,m_k,V)\hookrightarrow
Fl(R_1,V)^{\times t_1}\times...\times Fl(R_s,V)^{\times t_s}
$$
where by $Fl(R_i,V)^{t_i}$ we denote the direct product of 
$t_i$ copies of $Fl(R_i,V)$.

If now $\phi: Fl(m_1,...,m_k,V)\hookrightarrow Fl(n_1,...,
n_{\tilde{k}},V')$ is an embedding, we say that $\phi$ 
\textit{does not factor through any direct product} if $\phi\ne
\psi\circ\psi_{R,t_1,...,t_s}$ for any decomposition $R$, any
$t_i\in\mathbb{Z}_{\ge1}$ and any embedding 
$\psi:\ Fl(R_1,V)^{\times t_1}\times...\times Fl(R_s,V)^{
\times t_s}\hookrightarrow Fl(n_1,...,n_{\tilde{k}},V')$.
The definition clearly makes sense also in the orthogonal and
symplectic cases. 

\begin{lemma}\label{not extend}
Let $\phi:Fl(m_1,...,m_k,V)\hookrightarrow Fl(n_1,...,
n_{\tilde{k}},V')$ be a linear embedding which does not 
factor through any direct product. Assume that $\tilde{k}\ge
3$ and there exist integers $i$ and $j$, $1\le i,\ i+2\le 
j\le\tilde{k}$, such that the morphisms $\pi_i\circ\phi$ and 
$\pi_j\circ\phi$ are not constant maps. Then for any $l,\ 
i<l<j,$ the morphism $\pi_l\circ\phi$ is not a constant map. 
Similar statements are true in the orthogonal and symplectic 
cases.
\end{lemma}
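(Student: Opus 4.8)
The plan is to argue by contradiction. Suppose that for some $l_0$ with $i<l_0<j$ the morphism $\pi_{l_0}\circ\varphi$ is constant, with constant value a subspace $U\subseteq V'$ of dimension $n_{l_0}$. Since the image of $\varphi$ consists of flags, every flag $\varphi(F)$ passes through $U$ at the $l_0$-th slot; hence $\pi_{l'}(\varphi(F))\subseteq U$ for $l'\le l_0$ and $\pi_{l'}(\varphi(F))\supseteq U$ for $l'\ge l_0$, for every $F$. I will use this fixed $U$ to exhibit $\varphi$ as a composition $\varphi=\psi\circ\psi_{R,1,1}$, contradicting the hypothesis that $\varphi$ does not factor through a direct product. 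Here it is essential that a decomposition $\{m_1,\dots,m_k\}=R_1\cup R_2$ is allowed to be an \emph{overlapping} cover, so that source components used on both sides of $l_0$ cause no difficulty.

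First I set up the cover. By Proposition \ref{extension to Grassm growth} and the commutativity of \eqref{diagram 1}, for each nonconstant direction $l'$ the morphism $\pi_{l'}\circ\varphi$ equals $\varphi_{p(l'),l'}\circ\pi_{p(l')}$; that is, it depends only on the single source component $V_{m_{p(l')}}$, and $\varphi_{p(l'),l'}$ is a nonconstant linear morphism of grassmannians, hence an embedding by \cite{PT}. Put $P_L:=\{p(l')\mid l'<l_0,\ p(l')\ne0\}$ and $P_R:=\{p(l')\mid l'>l_0,\ p(l')\ne0\}$. As $\pi_i\circ\varphi$ is nonconstant with $i<l_0$ we have $p(i)\in P_L$, and likewise $p(j)\in P_R$, so both sets are nonempty. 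Since $p$ is surjective and every value $a\in\{1,\dots,k\}$ is attained at some nonconstant direction $\ne l_0$, we get $P_L\cup P_R=\{1,\dots,k\}$. Set $R_1:=\{m_a\mid a\in P_L\}$ and $R_2:=\{m_a\mid a\in P_R\}$, so $R_1\cup R_2=\{m_1,\dots,m_k\}$, and let $\psi_{R,1,1}\colon Fl(m_1,\dots,m_k,V)\hookrightarrow Fl(R_1,V)\times Fl(R_2,V)$ be the canonical embedding.

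Next I construct $\psi\colon Fl(R_1,V)\times Fl(R_2,V)\to Fl(n_1,\dots,n_{\tilde k},V')$ by sending $(F_1,F_2)$ to the tuple whose $l'$-th entry is the constant value of $\pi_{l'}\circ\varphi$ when $p(l')=0$, is $\pi_{l'}(\varphi(\widetilde F_1))$ when $l'\le l_0$ and $p(l')\ne0$, and is $\pi_{l'}(\varphi(\widetilde F_2))$ when $l'\ge l_0$ and $p(l')\ne0$, where $\widetilde F_1,\widetilde F_2$ are arbitrary full flags extending $F_1,F_2$. This is well defined: each such entry depends only on the one relevant component, which is recorded in $F_1$ or $F_2$, and the two prescriptions agree at $l'=l_0$ because $\pi_{l_0}\circ\varphi\equiv U$. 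The entries form an honest flag: among indices $\le l_0$ the nesting is that of $\varphi(\widetilde F_1)$, among indices $\ge l_0$ it is that of $\varphi(\widetilde F_2)$, and for a left index $l_1'<l_0$ against a right index $l_2'>l_0$ one has $\pi_{l_1'}(\varphi(\widetilde F_1))\subseteq U\subseteq\pi_{l_2'}(\varphi(\widetilde F_2))$. This chain through $U$ is the crux of the argument: the constant subspace decouples the left and right data, so the cross-nesting holds for arbitrary, independent $F_1$ and $F_2$. By construction $\psi\circ\psi_{R,1,1}=\varphi$.

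It remains to check that $\psi$ is an embedding. Its image lies in the closed subvariety $Y_U\subseteq Fl(n_1,\dots,n_{\tilde k},V')$ of flags whose $l_0$-th term is $U$, and $Y_U$ splits as a product of a flag variety in $U$ and one in $V'/U$. With respect to this splitting $\psi$ is the product of $\bar\alpha\colon Fl(R_1,V)\to Y_U^{\le l_0}$ and $\bar\beta\colon Fl(R_2,V)\to Y_U^{\ge l_0}$, and each is a closed embedding: $\bar\alpha$ factors as the canonical embedding $Fl(R_1,V)\hookrightarrow\prod_{l'<l_0,\,p(l')\ne0}G(m_{p(l')},V)$ followed by the product of the embeddings $\varphi_{p(l'),l'}$, and similarly for $\bar\beta$. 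Hence $\psi$ is a closed embedding, and $\varphi=\psi\circ\psi_{R,1,1}$ factors through the direct product $Fl(R_1,V)\times Fl(R_2,V)$, a contradiction. Thus no constant $\pi_{l_0}\circ\varphi$ with $i<l_0<j$ can occur. I expect the main obstacle to be precisely the well-definedness and embedding property of $\psi$, since this is where one must simultaneously use the overlapping cover (to absorb shared components) and the forced chain $\subseteq U\subseteq$; once that is in place the conclusion is immediate. The orthogonal and symplectic cases run verbatim, using the orthogonal decomposition $V'=V\oplus\widehat W$ and keeping all relevant subspaces isotropic, the only difference being the bookkeeping of the forms.
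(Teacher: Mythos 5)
Your proof takes essentially the same route as the paper: assuming a middle morphism $\pi_{l_0}\circ\phi$ is constant with value $U$ (the paper's $V'_l$), you use that constant subspace to decouple the slots to the left and right of $l_0$ and factor $\phi$ as $\psi\circ\psi_{R,1,1}$ through $Fl(R_1,V)\times Fl(R_2,V)$, contradicting the non-factorization hypothesis. You merely flesh out details the paper leaves implicit (the well-definedness of $\psi$ and its embedding property via the locus of flags passing through $U$), so the two arguments coincide in substance.
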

\begin{proof}
Suppose the contrary, i. e. that there exists $l,\ i<l<j,$ 
such that the morphism $\pi_l\circ\phi$ is a constant map, and 
let $V'_l:=\mathrm{im}(\pi_l\circ\phi)\subset V'$. Then $\phi$ 
induces well-defined embeddings
$$
\phi': Fl(p(\{0,1,...,l\}),V)\hookrightarrow 
Fl(n_1,...,n_{\tilde{k}},V'),
$$
$$
\phi'': Fl(p(\{l,...,\tilde{k}\}),V)\hookrightarrow 
Fl(n_1,...,n_{\tilde{k}},V'),
$$
where we consider $p(\{0,1,...,l\})$ and $p(\{l,...,\tilde{k}\}
)$ as types of flags. Moreover, $\phi$ clearly factors through 
the embedding
$$
\psi:\ Fl(p(\{0,1,...,l\}),V)\times Fl(p(\{l,...,\tilde{k}\},V)
\to Fl(n_1,...,n_{\tilde{k}},V'),
$$
where, for $F_1\in Fl(p(\{0,1,...,l\}),V)$ and $F_2\in Fl(p(
\{l,...,\tilde{k}\},V)$, the spaces with indices from 1 to 
$l$ of the flag $\psi(F_1\times F_2)$ coincide with those of 
the flag $\phi'(F_1)$, and the spaces with indices from $l$ 
to $\tilde{k}$ coincide with those of the flag $\phi''(F_2)$. 
The flag $\psi(F_1,F_2)$ is well defined as its space with 
index $l$ equals $V'_l$.
\end{proof}

\begin{theorem}\label{Thm 4.6}
 Let $\phi:Fl(m_1,...,m_k,V)\hookrightarrow Fl(n_1,...,
 n_{\tilde{k}},V')$ be a linear embedding.
Assume that all morphisms $\varphi_{p(j),j}:\ G(m_{p(j)},V)
\hookrightarrow G(n_j,V')$ from Proposition \ref{extension to 
Grassm growth} are strict standard extensions, and that $\phi
$ does not factor through any direct product. Then $\varphi$ 
is a strict standard extension. Analogous statements hold in 
the orthogonal and symplectic cases.
\end{theorem}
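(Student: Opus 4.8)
The plan is to reduce everything to the per-grassmannian data supplied by the hypothesis and then to glue that data into the single coherent datum required by Definition \ref{str st ext flags}. I would begin with Proposition \ref{extension to Grassm growth}, which for the given linear embedding $\varphi$ produces the grassmannian morphisms $\varphi_{p(j),j}$ and the commutative square \eqref{diagram 1}. Commutativity gives $\pi'_j\circ\varphi=\varphi_{p(j),j}\circ\pi_{p(j)}$, so the $j$-th subspace of the flag $\varphi(F)$ depends only on $V_{m_{p(j)}}$ and equals $\varphi_{p(j),j}(V_{m_{p(j)}})$. Since each $\varphi_{p(j),j}$ is a strict standard extension by assumption, Definition \ref{strict} (and the descriptions of $W$ and $U$ recorded immediately after it) attaches to each index $j$ a subspace $U_j\subset V'$, a surjective operator $\varepsilon_j:U_j\to V$ which is zero exactly when $p(j)=0$ (i.e. when $\pi'_j\circ\varphi$ is constant), and its kernel $W_j=\ker\varepsilon_j$, with $\varphi_{p(j),j}(V_{m_{p(j)}})=\varepsilon_j^{-1}(V_{m_{p(j)}})$. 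The task is to organize the collection $(U_j,\varepsilon_j)_j$ into a chain of operators satisfying \eqref{Ui,epsilon i} and \eqref{phi(...)}.

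Next I would locate the constant indices. Using the hypothesis that $\varphi$ does not factor through a direct product, Lemma \ref{not extend} shows that the indices $j$ with $\pi'_j\circ\varphi$ nonconstant form a single contiguous block; hence the constant indices split into an initial (``bottom'') segment and a final (``top'') segment. This dichotomy is exactly what permits the definition of the surjection $\overline{p}:\{0,\dots,\tilde{k}+1\}\to\{0,\dots,k+1\}$ of Definition \ref{str st ext flags}: bottom constants are sent to $0$, top constants to $k+1$, and on the nonconstant block one sets $\overline{p}=p$. Excluding a constant sandwiched between two nonconstants is indispensable here; the embedding of Example \ref{example 3.3}(iii) realizes precisely this forbidden configuration and fails to be a standard extension.

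The heart of the argument is the gluing step. Because $\varphi(F)$ is a flag, for every $i<j$ and every $F$ one has $\varepsilon_i^{-1}(V_{m_{p(i)}})\subset\varepsilon_j^{-1}(V_{m_{p(j)}})$, where for a constant index the corresponding side is the fixed image $W_j$. Varying $V_{m_{p(i)}}$ over all of $G(m_{p(i)},V)$ and taking unions yields $U_i\subset U_j$ in every case, producing a chain $U_1\subset\dots\subset U_{\tilde{k}}$ in $V'$; taking intersections instead yields $W_i\subset W_j$. For two nonconstant indices, the same inclusions applied to the subspaces through a fixed vector force $\varepsilon_j(u)\in\langle\varepsilon_i(u)\rangle$ for all $u\in U_i$, whence $\varepsilon_j|_{U_i}=\lambda_{ij}\,\varepsilon_i$ for a scalar; these scalars satisfy the cocycle relation $\lambda_{ik}=\lambda_{ij}\lambda_{jk}$, so rescaling the $\varepsilon_j$ along the chain makes $\varepsilon_j|_{U_i}=\varepsilon_i$, i.e. \eqref{Ui,epsilon i} commutes on the nonconstant block. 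For the bottom constants I would keep $\varepsilon_j=0$ and $U_j=W_j$; the inclusions $W_j\subset\ker\varepsilon_{a+1}$ needed for commutativity are exactly the $W_i\subset W_j$ just established. For the top constants, whose grassmannian datum only supplies $\varepsilon_j=0$, I would instead \emph{redefine} $\varepsilon_j:U_j=W_j\to V$ as a surjective extension of $\varepsilon_{j-1}$ (possible since $U_{j-1}\subset U_j$ and $\varepsilon_{j-1}$ is already onto); this changes neither $U_j$ nor the fixed image, since $\varepsilon_j^{-1}(V)=U_j=W_j$ for any such extension.

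With the chain $U_1\subset\dots\subset U_{\tilde{k}}$, the operators $\varepsilon_j$, and the map $\overline{p}$ assembled, formula \eqref{phi(...)} reduces to the already established identity $\pi'_j(\varphi(F))=\varepsilon_j^{-1}(V_{\overline{p}(j)})$, where the conventions $V_0=0$ and $V_{k+1}=V$ recover the two kinds of fixed images; hence $\varphi$ is a strict standard extension. In the orthogonal and symplectic cases I would run the identical argument, extracting from each isotropic grassmannian standard extension the additional data that $U_j$ is nondegenerate, $W_j$ isotropic, and $\varepsilon_j$ compatible with the induced forms, and choosing the top-constant extensions to act as an isometry on a complement of $W_{j-1}$ in $W_j$. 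The main obstacle I anticipate is precisely the gluing of the third paragraph: transporting the per-grassmannian linear-algebra data into one compatible flag of operators, resolving the scalar ambiguities through the cocycle condition, and correctly handling the constant projections — the top constants in particular, where the operator furnished by the grassmannian is the wrong (zero) one and must be replaced by a (form-compatible) extension.
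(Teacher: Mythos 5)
Your overall strategy coincides with the paper's own proof: use Proposition \ref{extension to Grassm growth} to produce the $\varphi_{p(j),j}$, Lemma \ref{not extend} to isolate a contiguous nonconstant block, extract the data $(U_j,\varepsilon_j,W_j)$ of Definition \ref{strict} from each grassmannian morphism, derive the chain inclusions $U_i\subset U_j$ and $W_i\subset W_j$ from the flag condition, force proportionality of the $\varepsilon$'s by an eigenvector-type intersection argument, rescale, and assemble $\overline{p}$. Your treatment of the top constants (replacing the zero operator by a surjective extension of $\varepsilon_{j-1}$, the constancy coming from $\varepsilon_j^{-1}(V_{k+1})=U_j$) is if anything cleaner than the paper, which sets those $\varepsilon_j$ equal to zero.

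There is, however, one genuine gap, located exactly at the step you call the heart of the argument. When you deduce $\varepsilon_j(u)\in\langle\varepsilon_i(u)\rangle$ for nonconstant $i<j$ by ``applying the inclusions to the subspaces through a fixed vector,'' you are intersecting over all $V_{m_{p(j)}}\supset V_{m_{p(i)}}\ni v$, i.e. you are tacitly assuming $p(i)\le p(j)$. Nothing in the hypotheses gives monotonicity of $p$ on the nonconstant block a priori. If instead $p(j)<p(i)$, the flag condition pairs $\varepsilon_i^{-1}(V_{m_{p(i)}})\subset\varepsilon_j^{-1}(V_{m_{p(j)}})$ with $V_{m_{p(j)}}\subset V_{m_{p(i)}}$, and the same intersection argument yields $\bigcap V_{m_{p(j)}}=0$, hence $\varepsilon_j|_{U_i}=0$, i.e. $\lambda_{ij}=0$. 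Then your rescaling step breaks down (a zero scalar cannot be normalized to $1$), and setting $\overline{p}=p$ on the nonconstant block would violate the order-preservation required of $\overline{p}$ in Definition \ref{str st ext flags}. The paper devotes a dedicated step to excluding this: assuming $p(j+1)<p(j)$, the induced map $\theta_j$ vanishes, so $U_j\subset W_{j+1}$, whence $\varepsilon_j^{-1}(V_{m_{p(j)}})\subset W_{j+1}\subset\varepsilon_{j+1}^{-1}(V_{m_{p(j+1)}})$ holds for \emph{arbitrary}, not merely nested, pairs $(V_{m_{p(j)}},V_{m_{p(j+1)}})$, and then the argument of Lemma \ref{not extend} shows that $\phi$ factors through a direct product, contradicting the hypothesis. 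This is a second, independent use of the no-direct-product assumption; you invoke that assumption only once, through Lemma \ref{not extend} (contiguity of the nonconstant indices), so your proof as written does not exclude the degenerate case $\lambda_{ij}=0$: the monotonicity of $p$ must be proved, not assumed.
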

\begin{proof} 
Lemma \ref{not extend} implies that there are $s$ and $t$, $s<t
$, so that $p(j)=0$ holds precisely for $j\le s$ and for $j\ge 
t$. 

In the case when there is a single index $j$ such 
that $\phi_{p(j),j}$ is a nonconstant morphism, the 
statement of the theorem is easy. We thus may assume that there 
are (at least) two indices $j$ and $j+1$, $1<j<j+1<t$, so that 
$\phi$ induces nonconstant strict standard extensions
\begin{equation*}\label{phi j,j+1}
\phi_{p(j),j}:\ G(m_{p(j)},V)\hookrightarrow G(n_j,V'),\ \ \ \ 
\ \ \phi_{p(j+1),j+1}:\ G(m_{p(j+1)},V)\hookrightarrow 
G(n_{j+1},V').
\end{equation*}

Define subspaces $U_j$ and $U_{j+1}$ of $V'$ by formula 
\eqref{descriptn of U} in which we put $\phi=\phi_{p(j),j}$
and $m=m_{p(j)}$, or $\phi=\phi_{p(j+1),j+1}$ and $m=m_{p(j+1)}
$, respectively. Let $(0\subset V_{m_1}\subset...\subset V_{m_k
}\subset V)$ denote an arbitrary point of $Fl(m_1,...,m_k,V)$. 
Since by definition
\begin{equation}\label{phi j in phi j+1}
\phi_{p(j),j}(V_{m_{p(j)}})\subset
\phi_{p(j+1),j+1}(V_{m_{p(j+1)}}) 
\end{equation}
for any subflag $V_{m_{p(j)}}\subset V_{m_{p(j+1)}}$ if $p(j)
<p(j+1)$, or for any subflag $V_{m_{p(j+1)}}\subset 
V_{m_{p(j)}}$ if $p(j+1)>p(j)$, formula \eqref{descriptn  
of U} implies that $U_j$ is a subspace of $U_{j+1}$. Next, 
since the strict standard extensions $\phi_{p(j),j}$ and 
$\phi_{p(j+1), j+1}$ are nonconstant, it follows from 
Definition \ref{strict} that there are surjective linear 
operators $\varepsilon_j:U_j\to V$ and $\varepsilon_{j+1}:
U_{j+1}\to V$, such that formula \eqref{eta, eps} holds for 
$\varepsilon=\varepsilon_j,\ m=m_{p(j)}$ and $\varepsilon=
\varepsilon_{j+1},\ m=m_{p(j+1)}$, respectively. This, 
together with \eqref{phi j in phi j+1}, means that
\begin{equation}\label{eps j in}
\varepsilon_j^{-1}(V_{m_{p(j)}})\subset
\varepsilon_{j+1}^{-1}(V_{m_{p(j+1)}}) 
\end{equation}
under the same conditions on $V_{m_{p(j)}}$ and $V_{m_{p(j+1)}
}$ as in \eqref{phi j in phi j+1}.

Denoting $W_j=\ker\varepsilon_j$ and $W_{j+1}=\ker\varepsilon
_{j+1}$, in view of \eqref{phi j in phi j+1} we obtain from 
\eqref{descriptn  of W} that $W_j$ is a subspace of $W_{j+1}$. 
The inclusions $U_j\subset U_{j+1}$ and $W_j\subset W_{j+1}$ join into a commutative diagram
\begin{equation}\label{2new Uij,Wij}
\xymatrix{
V\ar[r]^-{\theta_j} & V \\
U_j\ar@{^{(}->}[r]\ar[u]^-{\varepsilon_j} & 
U_{j+1}\ar[u]^-{\varepsilon_{j+1}}\\
W_j\ar@{^{(}->}[r]\ar@{^{(}->}[u] & W_{j+1},\ar@{^{(}->}[u]}
\end{equation}
where $\theta_j$ is the induced linear operator. From 
\eqref{eps j in} and \eqref{2new Uij,Wij} we obtain
\begin{equation}\label{theta j in}
\theta_j(V_{m_{p(j)}})\subset V_{m_{p(j+1)}}.
\end{equation}

Now we are going to show that 
\begin{equation*}\label{p(j)le p(j+1)}
p(j)\ \le\ p(j+1). 
\end{equation*}
Assume the contrary, i.e. $p(j+1)<p(j)$. Then the inclusion
\eqref{theta j in} implies
$$
\theta_j(V_{m_{p(j)}}) \subset \underset{V_{m_{p(j+1)}}\subset 
V_{m_{p(j)}}}{\bigcap} V_{m_{p(j+1)}}=0.
$$
Thus $\theta_j=0$, and consequently $U_j\subset W_{j+1}$ by
diagram \eqref{2new Uij,Wij}. This together with formula  
\eqref{eta, eps} means that the inclusion \eqref{eps j in}
extends to a pair of inclusions
\begin{equation*}\label{eps j in W}
\varepsilon_j^{-1}(V_{m_{p(j)}})\subset W_{j+1}\subset
\varepsilon_{j+1}^{-1}(V_{m_{p(j+1)}}),\ \ \ \ 
\end{equation*}
for any $(V_{m_{p(j)}},V_{m_{p(j+1)}})\in G(m_{p(j)},V)\times 
G(m_{p(j+1)},V)$. Then the exact same argument as in the 
proof of Lemma \ref{not extend} shows that $\phi$ factors 
through a direct product. Hence the assumption $p(j+1)<p(j)$ 
is invalid.

Next, we claim that $\theta_j=c_j\mathrm{Id}$ for some nonzero 
constant $c_j$. Note that $\theta_j\ne0$ by the above. Then, 
since $\varepsilon_j^{-1}(V_{m_{p(j)}})\subset\varepsilon_{j+1}
^{-1}(V_{m_{p(j+1)}})$,
we have $\theta_j(V_{m_{p(j)}})\subset V_{m_{p(j+1)}}$. Taking
into account that $\underset{V_{m_{p(j+1)}}\supset 
V_{m_{p(j)}}}{\bigcap}V_{m_{p(j+1)}}=V_{m_{p(j)}}$, we obtain
\begin{equation*}\label{theta(...)in}
\theta_j(V_{m_{p(j)}})\subset V_{m_{p(j)}}
\end{equation*}
for any $V_{m_{p(j)}}\in G(m_{p(j)},V)$. As any 1-dimensional 
subspace of $V$ is the intersection of all 
$m_{p(j)}$-dimensional subspaces which contain it, we see that
any vector in $V$ is an eigenvector for $\theta_j$. 
Consequently, we have $\theta_j=c_j\mathrm{Id}$ for $c_j\ne0$.

The above argument applies to any pair of integers $j,j+1$ 
where $s+1<j<t-2$. Therefore, we can construct a commutative 
diagram
\begin{equation}\label{Ui,epsilon i new}
\xymatrix{
V \ar[r]^-{\theta_1} & V \ar[r] & .\ .\ .\ar[r] &
V \ar[r]^-{\theta_{\tilde{k}}}  & V\\
U_1\ar[u]^-{\varepsilon_1}\ar@{^{(}->}[r] & 
U_2\ar[u]^-{\varepsilon_2}\ar@{^{(}->}[r] & .\ .\ .
\ar@{^{(}->}[r] & U_{\tilde{k}-1}\ar[u]^-{\varepsilon_{\tilde{k}-1}}\ar@{^{(}->}
[r] & U_{\tilde{k}} \ar[u]^-{\varepsilon_{\tilde{k}}} ,} 
\end{equation}
where the morphisms $\varepsilon_i$ equal zero for $i\le s$,
$i\ge t$, $\theta_i=\mathrm{Id}$ for $i\le s$ and $i\ge t$, 
and $\theta_i=c_i\mathrm{Id}$ with $c_i\ne0$ for $s+1\le i\le 
t-1$. Here, the spaces $U_1,...,U_s,U_{t+1},...,U_{\tilde{k}}$
are defined as the subspaces of $V'$ which equal the images of
the respective constant morphisms $\pi'_1\circ\phi,..., \pi'_s
\circ\phi$, $\pi'_{t+1}\circ\phi,...,\pi'_{\tilde{k}}
\circ\phi$, where 
$$
\pi'_r: \ Fl(n_1,...,n_{\tilde{k}},V')\to G(n_r,V')
$$
are the natural projections.

Via scaling the morphisms $\varepsilon_i$ for $s+1\le i
\le t-1$, we can turn the diagram \eqref{Ui,epsilon i new} 
into the diagram \eqref{Ui,epsilon i} in the definition of 
strict
standard extension. An immediate checking shows that our given
embedding $\phi$ is given by formula \eqref{phi(...)} for the
surjection $\overline{p}:\{0,1,...,\tilde{k},\tilde{k}+1\}\to
\{0,1,...,k,k+1\}$ where $\overline{p}(j)=p(j)$ for $j\le 
t-1$, $\overline{p}(j)=\tilde{k}+1$ for $j\ge t$.
\end{proof}

The next theorem is a more general version of Theorem \ref{Thm 
4.6}.

\begin{theorem}\label{Thm 4.3}
If, in the setting of Theorem \ref{Thm 4.6}, all morphisms 
$\phi_{p(j),j}$ are (not necessarily strict) standard 
extensions, then $\phi$ is also a standard extension.
\end{theorem}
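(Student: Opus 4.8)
The plan is to reduce Theorem \ref{Thm 4.3} to Theorem \ref{Thm 4.6} by means of the duality isomorphism, after first disposing of the orthogonal and symplectic cases, where there is nothing new to prove: there modified and strict standard extensions are synonyms, so both the hypothesis and the conclusion of Theorem \ref{Thm 4.3} coincide verbatim with those of Theorem \ref{Thm 4.6}. I would therefore concentrate on ordinary (non-isotropic) flag varieties. Recall that a component $\phi_{p(j),j}$ which is a standard extension is either strict or modified, a modified one being by definition the composite of a strict standard extension with the grassmannian duality $T\mapsto\mathrm{Ann}(T)$; dualizing a single component thus interchanges the two types. The flag-level duality $d:\ Fl(n_1,\dots,n_{\tilde k},V')\xrightarrow{\sim}Fl(\dim V'-n_{\tilde k},\dots,\dim V'-n_1,V'^{\vee})$ dualizes all grassmannian components simultaneously, and, being an isomorphism, it carries embeddings that do not factor through a direct product to embeddings with the same property.

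The heart of the matter is the claim that, under the hypothesis that $\phi$ does not factor through any direct product, the nonconstant components $\phi_{p(j),j}$ are \emph{all of the same type} --- either all strict or all modified. By Lemma \ref{not extend} the indices $j$ for which the target projection $\pi'_j\circ\phi$ is nonconstant form a consecutive block, and since the jumps of $p$ on this block are $0$ or $1$, it suffices to treat two adjacent nonconstant components $\phi_{p(j),j}$ and $\phi_{p(j+1),j+1}$ and to rule out a mismatch of types. Suppose, say, that $\phi_{p(j),j}$ is strict and $\phi_{p(j+1),j+1}$ is modified, with surjections $\varepsilon_j$ and $\eta_{j+1}$ attached as in Definition \ref{strict} (the latter valued in $V$ with domain in $V'^{\vee}$). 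The flag condition then forces, for every flag, $\varepsilon_j^{-1}(V_{m_{p(j)}})\subset\mathrm{Ann}\big(\eta_{j+1}^{-1}(V_{m_{p(j+1)}})\big)$.

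Passing to annihilators and letting the source subspaces run over all admissible positions --- choosing, when $m_{p(j)}\ge 2$, a subspace containing any prescribed pair of vectors --- one finds, using the description \eqref{descriptn  of U} of the domain spaces as unions $U_j=\bigcup_{V_m}\phi_{p(j),j}(V_m)$, that the two domain spaces pair to zero, i.e. $U'_{j+1}\subset\mathrm{Ann}(U_j)$. Equivalently, the fixed space $U_j$ satisfies $W_{n_j}\subset U_j\subset W_{n_{j+1}}$ for every flag, where $W_{n_l}=\phi_{p(l),l}(V_{m_{p(l)}})$. Such a fixed intermediate space is exactly what produces a factorization through a direct product: inserting $U_j$ decouples the part of the flag with indices $\le j$ (always contained in $U_j$) from the part with indices $\ge j+1$ (always containing $U_j$), precisely as in the proof of Lemma \ref{not extend}; when $p(j)=p(j+1)$ one duplicates the single source grassmannian $G(m_{p(j)},V)$ inside the map $\psi_{R,t_1,\dots,t_s}$. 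This contradicts the hypothesis, and the opposite mismatch (modified followed by strict) is excluded by the dual argument. I expect this uniformity claim to be the main obstacle: everything else is formal, whereas here one must genuinely verify that a type mismatch between adjacent nonconstant components is incompatible with $\phi$ not factoring through a direct product.

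With uniformity in hand the theorem follows quickly. If all nonconstant components are strict, Theorem \ref{Thm 4.6} applies directly and $\phi$ is a strict standard extension. If they are all modified, then $d\circ\phi:\ Fl(m_1,\dots,m_k,V)\hookrightarrow Fl(\dim V'-n_{\tilde k},\dots,\dim V'-n_1,V'^{\vee})$ has all components strict and still does not factor through a direct product, so by Theorem \ref{Thm 4.6} it is a strict standard extension; hence $\phi=d^{-1}\circ(d\circ\phi)$ is, by definition, a modified standard extension. The degenerate case of a single nonconstant component is subsumed in this dichotomy. In all cases $\phi$ is a standard extension, which completes the proof.
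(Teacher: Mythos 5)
Your proposal follows the paper's proof essentially verbatim: the paper likewise reduces the theorem to showing that two adjacent nonconstant components $\phi_{p(j),j}$, $\phi_{p(j+1),j+1}$ cannot be of mixed type (its cases (a)--(d)), where a mismatch produces a fixed subspace pinched between all images ($W_{j+1}$ there, $U_j$ in your version) and hence, by the mechanism of the proof of Lemma \ref{not extend}, a factorization through a direct product, after which Theorem \ref{Thm 4.6} is applied either to $\phi$ or to $d\circ\phi$ exactly as you do. The only divergences are organizational and harmless: the paper does not assume $p$ is monotone --- its cases (b) and (d) treat $p(j)>p(j+1)$, which your aside that the jumps of $p$ are $0$ or $1$ overlooks, although your annihilator-pairing computation is in fact insensitive to the relative order of $p(j)$ and $p(j+1)$ and so covers those cases too --- while your explicit caveat $m_{p(j)}\ge 2$ when $p(j)=p(j+1)$ flags a corner that the paper's case (a) (the intersection \eqref{0=}) silently glosses over as well.
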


\begin{proof}
First, as in the proof of Theorem \ref{Thm 4.6}, we assume 
that there are (at least) two indices $j$ 
and $j+1$ such that there are nonconstant standard extensions 
$\phi_{p(j),j}$ and $\phi_{p(j+1),j+1}$ as in \eqref{phi 
j,j+1}. The reader will easily handle the remaining case. 

We will show now that the standard extensions    
$\phi_{p(j),j}$ and $\phi_{p(j+1),j+1}$ are either both strict 
or are both modified. For this, we need to exclude the following other logical possibilities:\\
(a) $p(j)\le p(j+1)$, $\phi_{p(j),j}:\ G(m_{p(j)},V)
\hookrightarrow G(n_j,V')$ is a strict standard extension and 
$\phi_{p(j+1),j+1}:\ G(m_{p(j+1)},V)\hookrightarrow G(n_{j+1},
V')$ is a modified standard extension; \\
(b) $p(j)>p(j+1)$, $\phi_{p(j),j}$ is a modified standard 
extension and $\phi_{p(j+1),j+1}$ is a strict standard 
extension; \\ 
(c) $p(j)\le p(j+1)$, $\phi_{p(j),j}$ is a modified 
standard extension and $\phi_{p(j+1),j+1}$ is a strict 
standard extension; \\
(d) $p(j)>p(j+1)$, $\phi_{p(j),j}$ is a strict standard 
extension and $\phi_{p(j+1),j+1}$ is a modified standard 
extension. 

(a) Note that the modified standard extension $\phi_{p(j+1),
j+1}$ defines a flag of subspaces $W_{j+1}\subset U_{j+1}$ of 
$V'$ and a surjective linear operator
$\varepsilon_{j+1}:U_{j+1}\to V'^{\vee}$ with $\ker
\varepsilon_{j+1}=W_{j+1}$, such that 
\begin{equation}\label{eps modif}
\varphi_{p(j+1),j+1}(V_{m_{p(j+1)}})=\varepsilon_{j+1}^{-1}
((V/V_{m_{p(j+1)}})^{\vee}),
\end{equation} 
where $(V/V_{m_{p(j+1)}})^{\vee}$ is naturally considered as a 
subspace of $V^{\vee}$. Moreover,
\begin{equation}\label{descriptn  of U,W modif}
W_{j+1}=\underset{V_{m_{p(j+1)}}\subset V}{\bigcap} 
\varphi_{p(j+1),j+1}(V_{m_{p(j+1)}}).
\end{equation}
Formulas \eqref{eps modif} and \eqref{descriptn  of U,W modif} are corollaries of formulas \eqref{eta, eps} and 
\eqref{descriptn  of W}, respectively.

Now, given $V_{m_{p(j)}}\in G(m_{p(j)},V)$, we obtain
\begin{equation}\label{0=}
\{0\}=\underset{V_{m_{p(j+1)}}\supset V_{m_{p(j)}}}{\bigcap}
(V/V_{m_{p(j+1)}})^{\vee}, 
\end{equation}
where the intersection is taken in $(V/V_{m_{p(j)}})
^{\vee}$. Using \eqref{eps modif}-\eqref{0=}, we 
find $W_{j+1}=\underset{V_{m_{p(j+1)}}\supset V_{m_{p(j)}}}
{\bigcap}\phi_{p(j+1),j+1}(V_{m_{p(j+1)}})$. 
Therefore, 
\begin{equation}\label{phi subset}
\phi_{p(j),j}(V_{m_{p(j)}})\subset W_{j+1}\subset 
\varphi_{p(j+1),j+1}(V_{m_{p(j+1)}})
\end{equation}
for any $V_{m_{p(j+1)}}\in G(m_{p(j+1)},V)$. In view of 
\eqref{eta, eps} and \eqref{eps modif}, the inclusion
\eqref{phi subset} coincides with the inclusion \eqref{eps j 
in W}. Hence, as in the proof of Theorem \ref{Thm 4.6}, we see 
that $\phi$ factors through a direct product, contrary to our 
assumption. This contradiction rules out (a).

(b) Given $V_{m_{p(j+1)}}\in G(m_{p(j+1)},V)$, for any 
$V_{m_{p(j)}}\subset V_{m_{p(j+1)}}$ we have 
$\phi_{p(j+1),j+1}(V_{m_{p(j)}})\supset\phi_{p(j),j}
(V_{m_{p(j+1)}})$. Hence, there is an inclusion $\phi_{p(j),j}
(V_{m_{p(j+1)}})\subset\underset{V_{m_{p(j)}}\subset 
V_{m_{p(j+1)}}}{\bigcap}\phi_{p(j+1),j+1}(V_{m_{p(j)}})$, the 
right-hand side of which is zero, as it clearly follows from
the definition of nonconstant strict standard extension. Thus,
$\phi_{p(j),j}(V_{m_{p(j+1)}})=\{0\}$ which is a contradiction,
since $V_{n_1}\ne0$.

Cases (c) and (d) are reduced to cases (a) and  (b), 
respectively, via the duality isomorphisms $G(n_j,V')
\xrightarrow{\simeq} G(\dim V'-n_j,V'^{\vee})$ and $G(n_{j+1},
V')\xrightarrow{\simeq} G(\dim V'-n_{j+1},V'^{\vee})$. 
Thus, all the cases (a)-(d) lead to a contradiction. 

The above,
together with Lemma \ref{not extend}, implies that either all
nonconstant morphisms $\phi_{p(j),j}:\ G(m_{p(j)},V)
\hookrightarrow G(n_j,V')$ are strict standard extensions, or
that they all are modified standard extensions. In the latter 
case one considers the morphism  $d\circ\phi$, where $d$ is the
duality isomorphism. Then by Theorem \ref{Thm 4.6}, 
$d\circ\phi$ is a strict standard extension, and consequently
$\phi$ is a modified standard extension.
\end{proof}

We now introduce the following condition on a linear embedding
$$
\phi:Fl(m_1,...,m_k,V)\hookrightarrow Fl(n_1,...,
n_{\tilde{k}},V),
$$ 
or respectively,
$$
\phi:FlO(m_1,...,m_k,V)\hookrightarrow FlO(n_1,...,
n_{\tilde{k}},V)
$$
or
$$
\phi:FlS(m_1,...,m_k,V)\hookrightarrow FlS(n_1,...,
n_{\tilde{k}},V).
$$

(c) \textit{No nonconstant morphism $\phi_{p(j),j}:G(m_i,V)\to 
G(n_j,V')$ factors through an embedding of a projective 
subspace into $G(n_j,V')$; in the orthogonal and symplectic 
cases no nonconstant morphism $\phi_{p(j),j}:X\to Y$ for 
$X=GO(m_i,V)$ and $Y=GO(n_j,V')$, or $X=GS(m_i,V)$ and 
$Y=GS(n_j,V')$, factors through a smooth subvariety of $Y$ 
isomorphic to a grassmannian $G(m,V'')$ or a multidimensional 
quadric in case $Y=GO(n_j,V')$; in the case where $X=GO(s-1,V)
,\ Y=GO(t-1,V')$ for $\dim V=2s,\ \dim V'=2t$ for $t>s$, this 
latter condition should also be imposed on the induced 
morphism $\tilde{\phi}_{p(j),j}:GO(s,V)\to GO(t,V')$.}

We say that a linear embedding $\phi$ is \textit{admissible} if
it does not factor through any direct product and satisfies
condition (c).

Our main result in this section is the following.

\begin{corollary}\label{Cor 4.4}
An admissible linear embedding $\phi$ is a standard extension.
\end{corollary}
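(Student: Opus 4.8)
The plan is to reduce Corollary \ref{Cor 4.4} to Theorem \ref{Thm 4.3} by showing that condition (c) is exactly what forces each nonconstant induced morphism $\phi_{p(j),j}:G(m_{p(j)},V)\hookrightarrow G(n_j,V')$ to be a standard extension of grassmannians. Since $\phi$ is admissible, it does not factor through any direct product, so the remaining hypothesis of Theorem \ref{Thm 4.3} that needs verifying is precisely that every $\phi_{p(j),j}$ is a (not necessarily strict) standard extension. Once this is established, Theorem \ref{Thm 4.3} applies verbatim and yields that $\phi$ is a standard extension. Thus the entire content of the corollary is a statement about a single morphism of grassmannians, and I would isolate it as the key lemma: a nonconstant linear embedding of grassmannians satisfying condition (c) is a standard extension.

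To prove that key lemma, I would invoke the corresponding classification result for linear embeddings of grassmannians established in the authors' earlier paper \cite{PT}. The expected dichotomy is that a nonconstant linear embedding $\psi:G(m,V)\hookrightarrow G(n,V')$ is either a standard extension, or else it factors through one of the ``degenerate'' geometric configurations that condition (c) explicitly forbids: an embedding of a projective subspace, or (in the isotropic cases) a sub-grassmannian $G(m,V'')$ or a multidimensional quadric. So the argument is a direct case analysis: by the classification from \cite{PT}, a linear $\psi$ falls into finitely many types; condition (c) removes exactly the non-standard types; what survives is the standard extension. In the orthogonal corner case $X=GO(s-1,V)$, $Y=GO(t-1,V')$ with $\dim V=2s$, $\dim V'=2t$, I would additionally use that condition (c) is imposed on the companion morphism $\tilde\phi_{p(j),j}:GO(s,V)\to GO(t,V')$ via the maps $\theta$, $\theta'$ of \eqref{flag ortho n-1,n}, so that both Picard generators are controlled and the embedding is forced to be standard there as well.

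The main obstacle I anticipate is the isotropic (orthogonal and symplectic) case, and especially the exceptional orthogonal grassmannians where $\Pic$ has two generators. In the ordinary case the argument is clean: linearity means $\psi^*\mathcal O(1)\cong\mathcal O(1)$, so $\psi$ is given by a linear system that is a subsystem of the complete Pl\"ucker system, and condition (c) rules out the Pl\"ucker-type and projective-subspace factorizations, leaving only the standard extensions $V_m\mapsto\varepsilon^{-1}(V_m)$ of Definition \ref{strict}. In the orthogonal case with $m=\frac{\dim V}{2}-1$ one must keep track of both preferred generators $[\theta^*\mathcal O(1)]$ and $[\mathcal O(1)]$ simultaneously, and verify that condition (c) excludes the morphisms whose image lies in a sub-grassmannian or quadric; the delicacy is that the two spinor-type components of $GO(\frac{\dim V}{2},V)$ and the morphism $\theta$ interact, so checking that the linear-algebra datum $(U,\varepsilon)$ of Definition \ref{strict} can be reconstructed requires care with the identifications $V\cong V^\vee$ furnished by the form.

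Having verified the key lemma in all cases, the proof is short: if no nonconstant $\phi_{p(j),j}$ exists the statement is trivial (the embedding is then essentially constant and handled as in Theorem \ref{Thm 4.6}); otherwise every nonconstant $\phi_{p(j),j}$ is a standard extension by the key lemma, the no-factorization hypothesis holds by admissibility, and Theorem \ref{Thm 4.3} finishes the argument. I would present the proof as: \emph{By condition (c) together with the classification of linear embeddings of grassmannians, each nonconstant $\phi_{p(j),j}$ is a standard extension; admissibility gives the no-factorization hypothesis; apply Theorem \ref{Thm 4.3}.}
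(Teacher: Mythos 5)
Your overall reduction is exactly the paper's: admissibility supplies the no-factorization hypothesis of Theorem \ref{Thm 4.3}, so the whole content of Corollary \ref{Cor 4.4} is that condition (c) forces each nonconstant $\phi_{p(j),j}$ to be a standard extension of grassmannians. For ordinary grassmannians, and for isotropic grassmannians with $\mathrm{Pic}\,X\simeq\mathrm{Pic}\,Y\simeq\mathbb{Z}$, your appeal to the dichotomy of \cite[Thm.~1]{PT} (standard extension unless the morphism factors through a projective subspace, respectively a sub-grassmannian or a multidimensional quadric) is precisely what the paper does.

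The genuine gap is in the exceptional orthogonal case $X=GO(s-1,V)$, $Y=GO(t-1,V')$ with $\dim V=2s$, $\dim V'=2t$. There \cite[Thm.~1]{PT} does not apply to $\phi_{p(j),j}$ itself (its source and target have Picard rank $2$); it applies only to the companion morphism $\tilde\phi:GO(s,V)\to GO(t,V')$ obtained via the maps $\theta,\theta'$ of \eqref{flag ortho n-1,n}, on which condition (c) is imposed, and it yields that $\tilde\phi$ is a standard extension $V_s\mapsto V_s\oplus W'$. Your sentence that ``both Picard generators are controlled and the embedding is forced to be standard there as well'' asserts, but does not prove, the transfer of standardness from $\tilde\phi$ back to $\phi$, and this transfer is not formal: knowing $\tilde\phi$ determines $\phi$ only up to its restriction to each fiber $\theta^{-1}(x)$, which a priori is a linear embedding of projective spaces of the form $V_{s-1}\mapsto V_{s-1}\oplus W''(x)$ with $W''(x)$ possibly varying with $x\in GO(s,V)$. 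The paper's proof spends most of its length exactly here: along every line $\mathbb{P}^1\subset GO(s,V)$ the spaces $W''(x)$ form a subbundle $W''$ of $\phi^*\mathcal{S}'|_{\mathbb{P}^1}\simeq 2\mathcal{O}_{\mathbb{P}^1}(-1)\oplus(W_{\mathbb{P}^1}\oplus W')\otimes\mathcal{O}_{\mathbb{P}^1}$; linearity, i.e.\ $\phi^*\mathcal{O}_{GO(t-1,V')}(1)\cong\mathcal{O}_{GO(s-1,V)}(1)$, gives $\det W''\cong\mathcal{O}_{\mathbb{P}^1}$, Grothendieck's splitting theorem then forces $W''$ to be a trivial subbundle, hence $W''(x)$ is constant along each line; connecting any two points of $GO(s,V)$ by chains of lines and using $\bigcap_{\mathbb{P}^1}W_{\mathbb{P}^1}=\{0\}$ pins the constant space down to $W'$, so that $\phi$ is $V_{s-1}\mapsto V_{s-1}\oplus W'$. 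Without an argument of this kind your key lemma remains unproved in precisely the case that the last clause of condition (c) was designed for; everything else in your proposal matches the paper's proof.
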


\begin{proof}
According to Theorem \ref{Thm 4.3}, all we need to show is that
condition (c) implies that every nonconstant morphism 
$\phi_{p(j),j}$ is a standard extension. For usual 
grassmannians, this follows directly from \cite[Thm. 1]{PT}, 
which claims that a linear morphism of grassmannians $\phi_{p(j
),j}:X\to Y$ is a standard extension unless it factors through a projective subspace of $Y$. For isotropic
grassmannians, \cite[Thm. 1]{PT} applies only to the case when
$\mathrm{Pic}X\simeq\mathrm{Pic}Y\simeq\mathbb{Z}$, and also
implies our claim under this assumption. It remains to
consider the situation of a linear morphism $\phi_{p(j),j}:
G(s-1,V)\to G(t-1,V')$ where $\dim V=2s,\ \dim V'=2t,\ t\ge 
s$. In this situation, as stated in Section 
\ref{preliminaries}, we always have a commutative diagram
\begin{equation*}\label{diag with theta}
\xymatrix{
GO(s-1,V)\ar@{^{(}->}[rrr]^-{\phi_{p(j),j}}\ar[d]^-{\theta} 
& & & GO(t-1,V')\ar[d]^-{\theta'} \\
GO(s,V) \ar@{^{(}->}[rrr]^-{\tilde{\phi}_{p(j),j}} & & & 
GO(t,V').}
\end{equation*}

Here, \cite[Thm. 1]{PT} applies to the linear morphism
$\tilde{\phi}:=\tilde{\phi}_{p(j),j}$, implying that it is a 
standard extension whenever it does not factor through a 
grassmannian or a multidimensional quadric embedded in 
$GO(t,V')$. Let this standard extension have the form 
\begin{equation}\label{lower st ext}
V_s\mapsto V_s\oplus W',
\end{equation}
where $V'=V\oplus W$ is an orthogonal decomposition and $W'$ is
a maximal isotropic subspace of $W$. We will show that 
$\phi:=\phi_{p(j),j}$ is the standard extension 
\begin{equation}\label{upper st ext}
V_{s-1}\mapsto V_{s-1}\oplus W'.
\end{equation}
For this, consider an arbitrary projective line $\mathbb{P}^1$
on $GO(s,V)$, i.e. a smooth rational curve $C\subset GO(s,V)$ 
such that $\mathcal{O}_{GO(s,V)}(1)|_C\simeq\mathcal{O}_{
\mathbb{P}^1}(1)$. It is an exercise to see that there exists 
an isotropic subspace $W_{\mathbb{P}^1}\subset V$ of dimension 
$p-2$, such that the restriction $E:=\mathcal{S}|
_{\mathbb{P}^1}$ of the tautological bundle $\mathcal{S}$ on 
$GO(s,V)$ is isomorphic to $2\mathcal{O}_{\mathbb{P}^1}(-1)
\oplus W_{\mathbb{P}^1}\otimes\mathcal{O}_{\mathbb{P}^1}
$. Hence, by \eqref{lower st ext}, we have 
\begin{equation}\label{restr S'}
E':=\phi^*\mathcal{S}'|_{\mathbb{P}^1}\simeq
2\mathcal{O}_{\mathbb{P}^1}(-1)\oplus(W_{\mathbb{P}^1}\oplus W')\otimes\mathcal{O}_{\mathbb{P}^1},
\end{equation}
where $\mathcal{S}'$ is the tautological bundle on $GO(t-1,V')$. 

For any point $x\in\mathbb{P}^1$, consider the projective 
spaces $\theta^{-1}(x)=\mathbb{P}(E^{\vee}|_t)$ and 
$\theta'^{-1}(\tilde{\phi}(x))=\mathbb{P}((E')^{\vee}|_t)$. By 
definition, $\phi|_{\theta^{-1}(x)}:\theta^{-1}(x)\to
\theta'^{-1}(\tilde{\phi}(x))$ is a linear embedding of 
projective spaces, hence it has the form
\begin{equation}\label{new st extn}
V_{s-1}\mapsto V_{s-1}\oplus W''(x)
\end{equation}
for some unique isotropic vector subspace $W''(x)\subset V'$. 
Indeed, $W''(x)=\underset{V_{s-1}
\in\theta^{-1}(x)}{\bigcap} \varphi(V_{s-1})$ (see 
\eqref{descriptn  of W}). Moreover, by construction, 
$W'':=\{(x,W''(x))\}_{x\in\mathbb{P}^1}$ is a vector subbundle 
of $E'$, and the condition that $\phi^*\mathcal{O}_{GO(t-1,V')}
(1)\cong\mathcal{O}_{GO(s-1,V)}(1)$ (see Example 
\ref{lin emb isotr grass}) implies
\begin{equation}\label{det W''}
\det W''\cong\mathcal{O}_{\mathbb{P}^1}.
\end{equation}

Consider the composition of morphisms of sheaves:
$f:W''\stackrel{i}{\hookrightarrow}E'\stackrel{pr}{\to}
2\mathcal{O}_{\mathbb{P}^1}(-1)$ where $i$ is the above 
mentioned monomorphism and $pr$ is the canonical projection
defined by \eqref{restr S'}. If $f$ is a nonzero morphism, it 
follows from \eqref{det W''} and Grothendieck's Theorem that
$W''$ contains a direct summand $\mathcal{O}_{\mathbb{P}^1}
(a)$ for some $a>0$. But this contradicts to \eqref{restr S'}
since $i$ is a monomorphism. Hence, $f=0$, and by
\eqref{restr S'}, $W''$ is a subbundle of the trivial bundle
$(W_{\mathbb{P}^1}\oplus W')\otimes\mathcal{O}_{\mathbb{P}^1}$.
Therefore, in view of \eqref{det W''}, $W''$ is itself a 
trivial bundle. This means that the space $W''(x)$ does not 
depend on $x\in\mathbb{P}^1$, but possibly  depends only on 
the choice of projective line $\mathbb{P}^1$. We can set 
$W''(x)=W''_{\mathbb{P}^1}$. Then
\begin{equation}\label{W'' subset...}
W''_{\mathbb{P}^1}\subset W_{\mathbb{P}^1}\oplus W'.
\end{equation}

Pick a point $x_0\in\mathbb{P}^1$, so that $W''(x_0)=W''
_{\mathbb{P}^1}$. Next, pick another line $\mathbb{P}'^1$ 
through $x_0$, distinct from $\mathbb{P}^1$. Then 
$W''_{\mathbb{P}^1}=W''_{\mathbb{P}'^1}$. 
Since, as one easily checks, any two points in $GO(s,V)$ can be
connected by a chain of projective lines, we conclude that 
$W''_{\mathbb{P}^1}$ does not depend on the line 
$\mathbb{P}^1$. We therefore denote this space by $W''_0$, and 
the inclusion \eqref{W'' subset...} can be rewritten as
\begin{equation}\label{W'' in}
W''_0\subset W_{\mathbb{P}^1}\oplus W',\ \ \ \ \ \ \ 
\mathbb{P}^1\subset GO(s,V).
\end{equation} 

Now one easily observes that $\underset{\mathbb{P}^1\subset 
GO(s,V)}{\bigcap}W_{\mathbb{P}^1}=\{0\}$. Hence, \eqref{W'' in}
implies $W_0''=\underset{\mathbb{P}^1\subset GO(s,V)}{\bigcap}
(W_{\mathbb{P}^1}\oplus W')=W'$. It follows that the linear
embedding $\phi$ in \eqref{new st extn} is $V_{s-1}\mapsto 
V_{s-1}\oplus W'$, i.e., $\phi$ coincides with 
\eqref{upper st ext} as claimed.
\end{proof}

Corollary \ref{Cor 4.4} provides a sufficient condition, in 
terms of pure algebraic geometry, for a linear embedding of
flag varieties, or varieties of isotropic flags, to be a
standard extension.

\vspace{1cm}
\section{Admissible direct limits of linear embeddings of flag varieties are isomorphic to ind-varieties of generalized flags}
\label{ind-var}
\vspace{5mm}

We start by recalling the notions of generalized flag and  
ind-variety of generalized flags introduced in 
\cite[Section 5]{DP}. 
Let $V$ be an arbitrary vector space. A {\it chain of 
subspaces in $V$} is a set $\mathcal{C}$ of pairwise distinct 
subspaces of $V$ such that for any pair $F$, $H\in\mathcal{C}
$, one has either $F\subset H$ or $H\subset F$. Every chain of 
subspaces $\mathcal{C}$ is linearly ordered by inclusion. 
Given a chain $\mathcal{C}$, we denote by $\mathcal{C}'$ 
(respectively, by $\mathcal{C}''$) the subchain of 
$\mathcal{C}$ that consists of all subspaces $C \in 
\mathcal{C}$ which have an immediate successor (respectively, 
an immediate predecessor) with respect to this ordering. 

A {\it generalized flag in $V$} is a chain of subspaces 
$\mathcal{F}$ that satisfies the following conditions: \\
(i) each $F\in\mathcal{F}$ has an immediate successor or an 
immediate predecessor, i.e. $\mathcal{F}=\mathcal{F}'\cup
\mathcal{F}''$; \\
(ii) $V\backslash\{0\}=\cup_{F'\in\mathcal{F}'} F''\backslash 
F'$, where $F''\in\mathcal{F}''$ is the immediate successor of 
$F'\in \mathcal{F}'$.

In what follows, we assume that $V$ is a countable-dimensional 
vector space with basis $E=\{e_n\}_{n\in\mathbb{Z}_{>0}}$. A
generalized flag $\mathcal{F}$ in $V$ is \textit{compatible 
with the basis} $E$ if for every $F\in\mathcal{F}$ the set 
$F\cap E$ is a basis of $F$. We say that a generalized flag 
$\mathcal{F}$ is {\it weakly compatible with $E$}, if 
$\mathcal{F}$ is compatible with some basis $L$ of $V$ such 
that $E \backslash (E \cap L)$ is a finite set. 

\begin{example}
Let $V=\mathrm{Span}E$ where $E=\{e_n\}_{n\in
\mathbb{Z}_{>0}}$.

(i) Any finite chain $(0\subset F_1\subset...\subset F_k
\subset V)$ of coordinate subspaces (i. e. subspaces $F_i
\subset V$ satisfying $F_i=
\mathrm{Span}\{F_i\cap E\}$ for $1\le i\le k$) is a 
generalized flag compatible with the basis $E$. If $\dim F_i<
\infty$ for $1\le i\le k$, and if one drops the condition that 
all $F_i$ are coordinate subspaces, then the chain 
$(0\subset F_1\subset...\subset F_k\subset V)$ is a 
generalized flag weakly compatible with $E$. 

(ii) Fix a bijection $\mathbb{Z}_{>0}=\mathbb{Z}_{>0}\sqcup 
\mathbb{Z}_{<0}$, and let $\prec$ denote the linear order on 
$\mathbb{Z}_{>0}$, induced by the obvious linear order on 
$\mathbb{Z}_{>0}\sqcup\mathbb{Z}_{<0}$ in which all elements of
$\mathbb{Z}_{<0}$ are larger than all elements of 
$\mathbb{Z}_{>0}$. Then the chain $\{0,F_j,V\}_{j\in
\mathbb{Z}_{>0}}$, where $F_j=\{\mathrm{Span}\{e_i\}
_{i\preccurlyeq j}\}$, is a generalized flag compatible with 
$E$.

(iii) Fix a bijection $\mathbb{Z}_{>0}=\mathbb{Q}_l\sqcup 
\mathbb{Q}_r$, where $\mathbb{Q}_l=\mathbb{Q}=\mathbb{Q}_r$, and consider the following linear order on $\mathbb{Q}_l\sqcup 
\mathbb{Q}_r$: \ $j\prec t\ \Leftrightarrow\ j\in\mathbb{Q}_l
\sqcup\mathbb{Q}_r,\ t\in\mathbb{Q}_l\sqcup \mathbb{Q}_r,
\ j<t$, or $j=t,\ j\in\mathbb{Q}_l,\ t\in\mathbb{Q}_r$. Then
the chain $\{F'_j,F''_j\}_{j\in\mathbb{Q}_l}$, where 
$F'_j=\mathrm{Span}\{e_k\}_{k\prec j}$, $F''_j=\mathrm{Span}
\{e_k\}_{k\preccurlyeq j}$, is a
generalized flag compatible with $E$.
\end{example}

We define two generalized flags $\mathcal{F}$ and 
$\mathcal{G}$ in $V$ to be {\it $E$--commensurable} if both 
$\mathcal{F}$ and $\mathcal{G}$ are weakly compatible with $E$ 
and there exists an inclusion preserving bijection $\varphi:
\mathcal{F}\to\mathcal{G}$ and a finite-dimensional subspace 
$U \subset V$, such that for every $F \in \mathcal{F}$
\begin{equation*}\label{E-commens}
F\subset\varphi(F)+U,\ \ \ \varphi(F)\subset F+U,\ \ \ 
\dim(F\cap U)=\dim(\varphi(F)\cap U).
\end{equation*}

Let 
\begin{equation*}\label{X=F(...)}
\mathbf{X}=\mathbf{Fl}(\mathcal{F},E,V)
\end{equation*}
denote the set of all generalized flags in $V$ that are
$E$-commensurable 
with $\mathcal{F}$. We now explain that $\mathbf{X}$ 
has a natural ind-variety structure. Let $V'_n:=\mathrm{Span}
\{e_j|j\le n\}$. Then the intersection $\mathcal{F}\cap V'_n$ 
is a flag in $V'_n$, and let this flag have type $0<m'_{n,1}
<...<m'_{n,k_n}<n$ for $k_n\le n-1$. Since $\dim V'_{n+1}=\dim 
V'_n+1=n+1$, if we set $W'_n:=\mathrm{Span}\{e_{n+1}\}$,
we have $V'_{n+1}=V'_n\oplus W'_n$ and there is a standard
extension $i_n:Fl(m'_{n,1},...,m'_{n,k_n},V'_n)\hookrightarrow
Fl(n'_{n+1,1},...,n'_{n+1,k_{n+1}},V'_{n+1})$ given by 
formulas \eqref{example of st ext} or \eqref{example of st 
ext2} in Example 3.4 (where we had no need to use as many    
subscripts as well as primes).

Note that this standard extension $i_n$ is determined by the 
two types of flags $(m'_{n,1},...,m'_{n,k_n})$ and 
$(n'_{n+1,1},..,n'_{n+1,k_{n+1}})$, and by the choice of 
$W'_{n+1}$. In \cite{DP} it is shown that $\mathbf{Fl}(\mathcal{F},E,V)$ is 
naturally identified with the direct limit 
$$
\varinjlim Fl(m'_{n,1},...,m'_{n,k_n},V'_n)
$$ 
of the embeddings $i_n$. In particular, this equips 
$\mathbf{Fl}(\mathcal{F},E,V)$ with the structure of an 
ind-variety.

Let's now consider the case when $V$ is endowed a nondegenerate
symmetric or symplectic bilinear form $(\ ,\ )$. Here we assume
that either the basis $E$ is isotropic and is enumerated as 
$\{e_n,e^n
\}_{n\in\mathbb{Z}_{>0}}$ where $(e_n,e^n)=1$ for $n\in
\mathbb{Z}_{>0}$, or that $E$ is enumerated as $\{e_n,e_0,e^n
\}_{n\in
\mathbb{Z}_{>0}}$ where $e_n$ and $e^n$ are isotropic vectors 
satisfying $(e_n,e^n)=1$ for $n\in\mathbb{Z}_{>0}$ and $e_0$
satisfies $(e_0,e_n)=(e_0,e^n)=0$, $(e_0,e_0)=1$. This latter
enumeration of $E$ is possible only in the case of a symmetric 
form. We define a generalized flag $\mathcal{F}$ to be 
\textit{isotropic} if it consists of isotropic and coisotropic 
subspaces (a subspace $F$ is \textit{coisotropic} if 
$F^{\bot}$ is isotropic) and is invariant under taking 
orthogonal complement. In the current case, where $\dim 
V=\infty$, this definition is more convenient for our purposes
than the consideration of "purely isotropic" flags as in 
Sections \ref{preliminaries}, \ref{linear embed} and \ref{more 
linear embed}. Note that an isotropic generalized flag is
determined by its subchain of isotropic spaces.

\begin{example}\label{Example 5.2}
Consider the case where $V$ is endowed with a nondegenerate
symmetric form and the basis of $V$ is enumerated as $\{e_n,
e_0,e^n\}_{n\in\mathbb{Z}_{>0}}$ as above. Set $F^l_j=
\mathrm{Span}\{e_n\}_{n>j,j\ge0}$, $F^r_j=(F^l_j)^{\bot}$. 
Then $F^l_j\supset F^l_k,\ F^r_j\subset F^r_k,\ F^l_j\subset 
F^r_k$ for $k\ge j$, and $\{F^l_j,F^r_j\}_{j\in\mathbb{Z}
_{\ge0}}$ is  a maximal isotropic generalized flag compatible 
with $E$.
\end{example}

By $\mathbf{FlO}(\mathcal{F},E,V)$, or respectively 
$\mathbf{FlS}(\mathcal{F},E,V)$, we denote the set of all 
generalized flags which are $E$-commensurable with a fixed 
isotropic flag $\mathcal{F}$ compatible with $E$. To define an 
ind-variety structure on $\mathbf{FlO}
(\mathcal{F},E,V)$ or $\mathbf{FlS}(\mathcal{F},E,V)$, set
$V'_n=\mathrm{Span}\{e_j,e^j\}_{j\le n}$ or respectively
$V'_n=\mathrm{Span}\{e_j,e_0,e^j\}_{j\le n}$. Then 
$\mathcal{F}\cap V'_n$ has an isotropic subflag of type
$0<m'_{n,1}<...<m'_{n,k_n}\le [\frac{n}{2}]$, and there is a
standard extension
$$
\psi_n:\ FlO(m'_{n,1},...,m'_{n,k_n},V'_n)\hookrightarrow
FlO(m'_{n+1,1},...,m'_{n+1,k_{n+1}},V'_n)
$$
or
$$
\psi_n:\ FlS(m'_{n,1},...,m'_{n,k_n},V'_n)\hookrightarrow
FlS(m'_{n+1,1},...,m'_{n+1,k_{n+1}},V'_{n+1}),
$$
determined uniquely by the isotropic 1-dimensional subspace
$W_n=\mathrm{Span}\{e_{n+1}\}$. One can show that the direct
limit of the embeddings $\psi_n$ is identified with
$\mathbf{FlO}(\mathcal{F},E,V)$, or respectively $\mathbf{FlS}
(\mathcal{F},E,V)$, and hence $\mathbf{FlO}(\mathcal{F},E,V)$ 
and $\mathbf{FlS}(\mathcal{F},E,V)$ are ind-varieties 
\cite{DP}.

Next, we will relate an arbitrary direct limit of strict
standard extensions to the ind-varieties $\mathbf{Fl}
(\mathcal{F},E,V)$, $\mathbf{FlO}(\mathcal{F},E,V)$, or
$\mathbf{FlS}(\mathcal{F},E,V)$. First, consider a chain of 
strict standard extensions
\begin{equation}\label{phi_n's}
\phi_N:Fl(m_{N,1},..., m_{N,k_N},V_N)\hookrightarrow Fl(
m_{N+1,1},...,m_{N+1,k_{N+1}},V_{N+1}) 
\end{equation}
for some choice of vector spaces $V_N$, $\dim V_{N+1}>\dim V_N$
for $N\in\mathbb{Z}_{>0}$. Then, according to Proposition 
\ref{descrn of st ext}, we may choose vector spaces 
$\widehat{W}_N$, together with isomorphisms
\begin{equation*}\label{Vn in Vn+1}
V_{N+1}=V_N\oplus \widehat{W}_N,
\end{equation*} 
and flags in $\widehat{W}_N$
\begin{equation*}\label{flag for n}
W_{N,1}\subset...\subset
W_{N,k_N}\subset\widehat{W}_N,
\end{equation*}
such that each $\phi_N$ is given by:
\begin{equation*}\label{descr of phi n's}
\phi_N(0\subset V_{m_{N,1}}\subset...
\subset V_{m_{N,k_N}}\subset V_N)=\\
(0\subset V_{m_{N,1}}\oplus W_{N,1}\subset ...\subset V_{m_{N,k_N}}\oplus W_{N,k_N}\subset
V_{N+1}).
\end{equation*}
Set
\begin{equation*}\label{V=lim Vn}
V:=\underset{\to}\lim V_N. 
\end{equation*} 

Our aim is to define a basis $E$ of $V$ and a generalized flag
$\mathcal{\underline{F}}$ compatible with $E$,  so that the
direct limit of the strict standard extensions $\phi_N$ can be
identified with $\mathbf{Fl}(\mathcal{\underline{F}},E,V)$. 
Fix a flag $F_1=(0\subset V_{1,1}\subset...\subset V_{1,k_1} 
\subset V_1)\in Fl(m_{1,1},...,m_{1,k_1},V_1)$. Choose a basis 
\begin{equation*}\label{basis E}
E=\{e_{\alpha}\}_{\alpha\in\mathbb{Z}_{>0}}
\end{equation*} 
of $V$ such that, for all subspaces $T$ of $V$ of the form 
$V_{1,1},...,V_{1,k_1}$ and $W_{N,j}$ for $N$ and $j$, the 
set $T\cap E$ is a basis of $T$. Consider the following 
equivalence relation $\sim$ on the set $E$. We write 
\begin{equation*}\label{equiv}
e_{\alpha}\sim e_{\tilde{\alpha}}
\end{equation*}
if there exists $N_{\alpha}\in\mathbb{Z}_{>0}$ such that, for 
any $N\ge N_{\alpha}$, there is no space of the flag 
$\phi_N\circ\phi_{N-1}\circ...\circ\phi_1(F_1)$ containing 
$e_{\alpha}$ but not $e_{\tilde{\alpha}}$, or vice versa.
Using the fact that all embeddings $\phi_N$ are strict 
standard extensions, one checks that $\sim$ is an equivalence 
relation. Denote by $[e_{\alpha}]$ the equivalence class of 
the vector $e_{\alpha}$. 

Next, we claim that, by construction, 
the set $A$ of equivalence classes $[e_{\alpha}]$ is linearly 
ordered, and we will denote this linear ordering by the symbol 
$\prec$. Indeed, let $[e_{\alpha}]\ne[e_{\beta}]$. For 
$n\ge\max\{N_{\alpha},N_{\beta}\}$, consider the flag 
$\phi_N\circ\phi_{N-1}\circ...\circ\phi_1(F_1)$
and take its smallest subspaces containing respectively 
$e_{\alpha}$ and $e_{\beta}$. Since $[e_{\alpha}]\ne
[e_{\beta}]$, it follows that these spaces are not equal. 
By definition, we have $[e_{\alpha}]\prec[e_{\beta}]$ if the 
smallest space of the flag $\phi_N\circ\phi_{N-1}\circ...\circ
\phi_1(F_1)$ containing $e_{\alpha}$ is smaller than the 
smallest space of the same flag containing $e_{\beta}$. 

Finally, we define a generalized flag 
$\mathcal{\underline{F}}$, compatible with the 
basis $E$, and determined by the above order on $E$. For this, 
we associate two subspaces of $V$ to any equivalence class 
$a=[e_{\alpha}]$ :
\begin{equation}\label{flags F',F''}
F'_a=\mathrm{Span}\{e_{\beta}\ |\ [e_{\beta}]\prec a\}, \ \ \ 
\ \ F''_a=\mathrm{Span}\{e_{\beta}\ |\ [e_{\beta}]\preccurlyeq 
a\}.
\end{equation}
Then the set of vector subspaces of $V$
\begin{equation}\label{flag cal F0}
\mathcal{\underline{F}}=\{F'_a,F''_a\}_{a\in A}
\end{equation}
is easily seen to be a generalized flag in $V$ compatible with 
$E$. 

If, instead of \eqref{phi_n's}, we consider standard extensions
\begin{equation}\label{Z}
\psi_N:\ FlO(m_{N,1},...,m_{N,k_N},V_N)\hookrightarrow
FlO(m_{N+1,1},...,m_{N+1,k_{N+1}},V_{N+1})
\end{equation}
or
\begin{equation}\label{T}
\psi_N:\ FlS(m_{N,1},...,m_{N,k_N},V_N)\hookrightarrow
FlS(m_{N+1,1},...,m_{N+1,k_{N+1}},V_{N+1}),
\end{equation}
a similar construction of a relevant basis $E$ goes through.
First of all, in the case of \eqref{Z}, for our purposes it
suffices to assume that that the dimension of all spaces $V_N$
are simultaneously odd or even. We require $E$ to have the 
form $\{e_n,e_0,e^n\}_{n\in\mathbb{Z}_{>0}}$ in the odd case, 
and the form $\{e_n,e^n\}_{n\in\mathbb{Z}_{>0}}$ in the even 
case.
This latter form applies also to the case of \eqref{T}. In all
cases, $E$ has to be chosen by the same condition that all
subspaces of the form $V_{1,1},...,V_{1,k_1}$ and $W_{N,k_j}$
for $N\in\mathbb{Z}_{>0}$ are generated by subsets of $E$.
Next, in order to define a linear order on $E$, one applies to
the vectors $e_n$ the procedure outlined above, and then sets
$e^k\prec e^l\ \Leftrightarrow\ e_l\prec e_k$. Finally,
whenever there is a vector $e_0$ one puts $e_n\prec e_0\prec
e^k$ for any $k,n\in\mathbb{Z}_{>0}$. Then the generalized 
flag $\mathcal{\underline{F}}$ determined by formulas 
\eqref{flags F',F''} and \eqref{flag cal F0} is isotropic (in
the sense of the definition of the beginning of this section)
and an ind-variety $\mathbf{FlO}(\mathcal{\underline{F}},E,V)$ , or respectively $\mathbf{FlS}(\mathcal{\underline{F}},E,V)$
is well defined.

We are now ready for the following theorem.
\begin{theorem}\label{main thm}
There is an isomorphism of ind-varieties
$$
\varinjlim Fl(m_{N,1},...,m_{N,k_N},V_N)\simeq
\mathbf{Fl}(\mathcal{\underline{F}},E,V).
$$
Similarly, in the orthogonal and symplectic cases, there are
isomorphisms of ind-varieties
$$
\varinjlim FlO(m_{N,1},...,m_{N,k_N},V_N)\simeq
\mathbf{Fl}(\mathcal{\underline{F}},E,V),
$$
$$
\varinjlim FlS(m_{N,1},...,m_{N,k_N},V_N)\simeq
\mathbf{Fl}(\mathcal{\underline{F}},E,V).
$$
\end{theorem}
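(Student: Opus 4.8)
The plan is to exhibit the two ind-varieties as direct limits of the \emph{same} system of flag varieties, up to refinement. The left-hand side is by definition the direct limit of the strict standard extensions $\phi_N$ from \eqref{phi_n's}, while $\mathbf{Fl}(\mathcal{\underline{F}},E,V)$ carries, by \cite{DP}, the ind-variety structure of the direct limit $\varinjlim Fl(m'_{n,1},\dots,m'_{n,k_n},V'_n)$ along the one-step standard extensions $i_n$ that adjoin a single basis vector $e_{n+1}$. Since the ind-structure on $\mathbf{Fl}(\mathcal{\underline{F}},E,V)$ is independent of the chosen enumeration of $E$, I am free to enumerate $E$ compatibly with the block filtration $\{V_N\}$: first a basis of $V_1$, then bases of $\widehat{W}_1,\widehat{W}_2,\dots$ in turn, ordering the vectors inside each $\widehat{W}_N$ so that the coordinate flag $W_{N,1}\subset\dots\subset W_{N,k_N}$ is respected. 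Because $V_1$ and all $W_{N,j}$ are coordinate subspaces and $V_N=V_1\oplus\widehat{W}_1\oplus\dots\oplus\widehat{W}_{N-1}$, each $V_N$ is then a coordinate subspace equal to $V'_{\dim V_N}$.

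The key step is to refine each block extension $\phi_N$ into a chain of elementary extensions using \refle{Lemma 3.7}. Applying that lemma repeatedly to the splitting of $\widehat{W}_N$ into its ordered one-dimensional coordinate summands, I would factor
$$
Fl(\dots,V_N)=Fl(\dots,V'_{\dim V_N})\hookrightarrow Fl(\dots,V'_{\dim V_N+1})\hookrightarrow\dots\hookrightarrow Fl(\dots,V'_{\dim V_{N+1}})=Fl(\dots,V_{N+1}),
$$
where each one-step arrow is the strict standard extension adjoining a single basis vector and is therefore of the explicit shape \eqref{example of st ext}--\eqref{example of st ext2} from Example \ref{example 3.3}. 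Inserting these intermediate flag varieties between the terms $Fl(\dots,V_N)$ produces a refinement of the system $\{\phi_N\}$ whose structure maps are precisely the one-step extensions just constructed.

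It then remains to identify this refined system with the system $\{i_n\}$ defining $\mathbf{Fl}(\mathcal{\underline{F}},E,V)$. Both one-step extensions at level $n$ are completely determined by the same three data --- the flag type of $\mathcal{\underline{F}}\cap V'_n$, the flag type of $\mathcal{\underline{F}}\cap V'_{n+1}$, and the new line $\mathrm{Span}\{e_{n+1}\}$ --- so I would check that they agree. This is exactly where the construction of $\mathcal{\underline{F}}$ via \eqref{flags F',F''}--\eqref{flag cal F0} enters: by design $\mathcal{\underline{F}}\cap V_N$ has type $(m_{N,1},\dots,m_{N,k_N})$, and the interpolating flags $\mathcal{\underline{F}}\cap V'_n$ record precisely the partial adjunctions of the $W_{N,j}$, so the types and the added lines match term by term. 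Since a direct system and a refinement obtained by factoring its structure maps have canonically isomorphic direct limits, the two ind-varieties are isomorphic. The orthogonal and symplectic cases go through verbatim using the isotropic version of \refle{Lemma 3.7} and the corresponding construction of $E$ and $\mathcal{\underline{F}}$; here one checks additionally that the conventions $e^k\prec e^l\Leftrightarrow e_l\prec e_k$ and the placement of $e_0$ keep $\mathcal{\underline{F}}$ isotropic and each intermediate $V'_n$ nondegenerate.

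The main obstacle I anticipate is the bookkeeping in matching the enumeration of $E$ simultaneously to the block filtration $\{V_N\}$ and to the flag-type interpolation, so that \refle{Lemma 3.7} yields \emph{exactly} the elementary extensions $i_n$ rather than merely extensions of the same combinatorial type. Invoking the enumeration-independence of the ind-structure from \cite{DP} is what makes this matching legitimate, and it is the step on which the whole identification rests.
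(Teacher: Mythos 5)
Your proposal is correct and takes essentially the same route as the paper's own proof: both factor each block extension $\phi_N$ into one-step strict standard extensions by iterated application of Lemma~\ref{Lemma 3.7} to the splittings $V'_{n+1}=V'_n\oplus\mathrm{Span}\{e_{n+1}\}$, and then identify these with the elementary embeddings $i_n$, observing that each one-step extension is determined by the same data (the two flag types and the adjoined line). Your extra explicit step---enumerating $E$ compatibly with the block filtration $\{V_N\}$ and appealing to enumeration-independence of the ind-structure from \cite{DP}---is implicit in the paper's choice of basis, so nothing is missing.
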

\begin{proof}
We consider only the case of ordinary flag varieties, and 
leave the other cases to the reader. Note 
first that $(m_{N,1},...,m_{N,k_N})$ is the type of the flag
$\mathcal{\underline{F}}\cap V_N$, so that $\mathbf{Fl}
(\mathcal{\underline{F}},E,V)=\varinjlim Fl(m_{N,1},...,
m_{N,k_N},V_N)$ where the direct limit is taken with respect 
to the embeddings 
$$
i_{\dim V_{N+1}-1}\circ...\circ i_{\dim V_N}:\ Fl(m_{N,1},...,
m_{N,k_N},V_N)\hookrightarrow Fl(m_{N+1,1},...,
m_{N+1,k_{N+1}},V_{N+1}).
$$
The embeddings $i_n$ were introduced in the first part of this
section, and are given by formulas \eqref{example of st ext} 
and \eqref{example of st ext2}, respectively.

However, we claim that our fixed standard extension $\phi_N$ 
equals the composition $i_{\dim V_{N+1}-1}\circ...\circ 
i_{\dim V_N}$. This follows from an iterated application of
Lemma \ref{Lemma 3.7} to the decompositions
$$
V_{N+1}=V'_{\dim V_{N+1}-1}\oplus\mathrm{Span}\{e_{\dim V_{N+1}}
\},
$$
$$
\ V'_{\dim V_{N+1}-1}=V'_{\dim V_{N+1}-2}\oplus\mathrm{Span}
\{e_{\dim V_{N+1}-1}\},...,
$$
$$V'_{\dim V_N+1}=V_N\oplus\mathrm{Span}\{e_{\dim V_N+1}\},
$$ 
and from the observation that the corresponding standard
extensions
$$
Fl(m_{n,1},...,m_{n,k_n},V'_n)\hookrightarrow Fl(m_{n+1,1},...,
m_{n+1,k_{n+1}},V'_{n+1})
$$
arising in this way, are determined simply by the splitting
$V'_{n+1}=V'_n\oplus\mathrm{Span}\{e_{n+1}\}$. Since the 
standard extension $i_n$ is determined by the same 
decomposition, the statement follows. 
\end{proof}

The following corollary can be considered as the main result 
of this paper.

\begin{corollary}\label{cor 5.3}
The direct limit of any admissible sequence of linear 
embeddings, $\varinjlim Fl(m_{N,1},...m_{N,k_N},V_N)$, 
$\varinjlim FlO(m_{N,1},...m_{N,k_N},V_N)$, or $\varinjlim 
FlS(m_{N,1},...m_{N,k_N},V_N)$, is a homogeneous ind-variety
for the group $SL(\infty),\ O(\infty)$ or $Sp(\infty)$, 
respectively.
\end{corollary}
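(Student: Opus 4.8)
The plan is to deduce the statement from \refcor{Cor 4.4}, \refth{main thm}, and the main theorem of \cite{DP}. First I would apply \refcor{Cor 4.4} to each embedding of the given admissible sequence: since an admissible linear embedding is a standard extension, every connecting map $\phi_N$ (respectively $\psi_N$) is a standard extension. In the orthogonal and symplectic cases a standard extension is automatically strict, so each map is already a strict standard extension; in the ordinary case $\phi_N$ may be strict or modified. The aim of the remaining steps is to bring the direct system into the shape to which \refth{main thm} applies, namely a chain of strict standard extensions with $\dim V_{N+1}>\dim V_N$ and, in the orthogonal case, with the $\dim V_N$ all of one parity.

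The normalization uses only the standard fact that the ind-variety defined by a direct system depends solely on any cofinal subsystem, together with the composition rules for standard extensions (the flag analogue of the rule recorded for grassmannians, whereby a composition is strict exactly when an even number of its factors are modified). For ordinary flag varieties, if only finitely many $\phi_N$ are modified, then the tail $\{\phi_N\}_{N\ge N_0}$ past the last modified map is a chain of strict standard extensions computing the same limit. If infinitely many $\phi_N$ are modified, I would group the maps into consecutive blocks each containing exactly two modified extensions; the connecting morphisms of the resulting cofinal subsystem are then compositions involving an even number of modifications, hence strict standard extensions, and $\dim V_N$ still strictly increases along the subsystem. For orthogonal flag varieties there are no modified extensions, but $\dim V_N$ need not have constant parity; here I would pass, by the pigeonhole principle, to a cofinal subsequence on which $\dim V_N$ is always odd or always even, whose connecting morphisms are again strict standard extensions. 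In the symplectic case $\dim V_N$ is automatically even and no normalization is needed. In every case the direct limit is isomorphic to the direct limit of a chain of strict standard extensions of the form required by \refth{main thm}.

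I would then invoke \refth{main thm} to identify this limit with an ind-variety of generalized flags $\mathbf{Fl}(\underline{\mathcal{F}},E,V)$, respectively $\mathbf{FlO}(\underline{\mathcal{F}},E,V)$ or $\mathbf{FlS}(\underline{\mathcal{F}},E,V)$, for the basis $E$ and generalized flag $\underline{\mathcal{F}}$ constructed before \refth{main thm}. Finally I would appeal to \cite{DP}: each such ind-variety of generalized flags is a homogeneous ind-space of the form $GL(\infty)/P$ for a splitting parabolic subgroup $P$, respectively $O(\infty)/P$ or $Sp(\infty)/P$ for an isotropic generalized flag. Since the scalar ind-subgroup of $GL(\infty)$ acts trivially on flags, the ind-space $GL(\infty)/P$ coincides with $SL(\infty)/(P\cap SL(\infty))$; hence the direct limit is homogeneous for $SL(\infty)$, respectively $O(\infty)$ or $Sp(\infty)$, which is the assertion of the corollary.

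I expect the main obstacle to be the normalization step, since \refth{main thm} is stated only for strict standard extensions with the dimension and parity conditions above, whereas \refcor{Cor 4.4} guarantees merely that each map is a standard extension. The delicate points are to verify that the connecting morphisms of the chosen cofinal subsystem really are strict standard extensions of the explicit form used in the construction preceding \refth{main thm}, and that passing to this subsystem preserves both the strict monotonicity of $\dim V_N$ and the isomorphism type of the direct limit. Once the system has been normalized, the identification via \refth{main thm} and the homogeneity statement of \cite{DP} are immediate.
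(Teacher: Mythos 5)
Your proposal is correct and follows essentially the same route the paper intends: Corollary \ref{Cor 4.4} to convert admissible embeddings into standard extensions, Theorem \ref{main thm} to identify the limit with $\mathbf{Fl}(\underline{\mathcal{F}},E,V)$ (resp.\ $\mathbf{FlO}$, $\mathbf{FlS}$), and the main theorem of \cite{DP} for homogeneity, the paper itself stating the corollary without further argument beyond this chain. Your normalization step (passing to a cofinal subsystem whose connecting maps are strict, using the even-parity composition rule for modified extensions and constant parity of $\dim V_N$ in the orthogonal case) is a detail the paper leaves implicit, and you handle it correctly.
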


The claim of Corollary \ref{cor 5.3} can be derived more 
directly from Corollary \ref{Cor 4.4} by showing that any 
direct limit of standard extensions is a homogeneous 
ind-variety, but Theorem \ref{main thm} provides an explicit 
description of such a direct limit as an appropriate 
ind-variety of generalized flags. We should also point out that
homogeneous ind-varieties of the ind-groups $GL(\infty),$ $SL(\infty),$ $O(\infty),$ $Sp(\infty)$ have been studied in papers preceding \cite{DP}, see \cite{DPW} and the references
therein.

\vspace{1cm}
\section{Appendix}\label{special}
\vspace{5mm}

In this appendix, we construct ind-varieties which are not 
isomorphic to ind-varieties of generalized flags, but 
nevertheless are direct limits of linear embeddings of flag 
varieties. Here we use the notation $\mathbb{P}(V)$ also for a
countable-dimensional vector space. $\mathbb{P}(V)$ is the 
ind-variety of 1-dimensional subspaces of $V$. We also write 
$\mathbb{P}^{\infty}$ instead of $\mathbb{P}(V)$ when we do 
not need to specify $V$.

First, consider the following chain of linear embeddings
$$
...\hookrightarrow Fl(1,2^n-1,V_n)\overset{k_n}
{\hookrightarrow}G(1,V_n)\times G(2^n-1,V_n)\overset{j_n}
{\hookrightarrow}Fl(1,2^{n+1}-1,V_n\oplus V_n)\overset{k_{n+1}}
{\hookrightarrow}
$$
$$
\overset{k_{n+1}}{\hookrightarrow}G(1,V_n\oplus V_n)
\times G(2^{n+1}-1,V_n\oplus V_n)\hookrightarrow...\ ,
$$
where $\dim V_n=2^n,\ k_n$ and $k_{n+1}$ are the canonical 
embeddings, and $j_n(V_1,V_{2^n-1})=(V_1\subset V\oplus0\subset V\oplus V_{2^n-1})$ for subspaces $V_1,\ 
V_{2^n-1}\subset V$ of respective dimensions 1 and $2^n-1$. Clearly, the embedding
$$
j_n\circ k_n:Fl(1,2^n-1,V_n)\hookrightarrow Fl(1,2^{n+1}-1,V_n)
$$
is linear but does not satisfy condition (b) of Theorem 
\ref{Thm 4.3} as it factors through the embedding $k_n$. The 
direct limit $\varinjlim Fl(1,2^n-1,V_n)$ is isomorphic as an 
ind-variety to the direct limit of embeddings
$$
G(1,V_n)\times G(2^n-1,V_n)\overset{k_{n+1}\circ j_n}
{\hookrightarrow}G(1,V_n\oplus V_n)\times G(2^n-1,V_n\oplus 
V_n),
$$
which is easily checked to be isomorphic to the direct product
$\mathbb{P}(V)\times\mathbb{P}(V)$ for a countable-dimensional
vector space $V$. The ind-variety $\mathbb{P}(V)\times
\mathbb{P}(V)$ is not isomorphic to an ind-variety of 
generalized flags.

Next, we will give a more interesting example in which 
condition (c) is not satisfied. More precisely, we will 
construct a linear embedding $\phi: Fl(m_1,m_2,V)
\hookrightarrow Fl(n_1,n_2,V')$ that will have the property 
that $p(1)=1,\ p(2)=2,$ $\phi_{2,2}:G(m_2,V)\to G(n_2,V')$ is 
a standard extension, but $\phi_{1,1}:G(m_1,V)\to G(n_1,V')$ 
factors through a projective subspace of $G(n_1,V')$.

Let $3\le\dim V<\infty$, fix positive integers $m_1,\ m_2,\ 
1< m_1< m_2<\dim V,$ and let $V^0$ be a subspace of $V$ of 
dimension $\dim V-m_1+1$. Consider the rational morphism
\begin{equation*}\label{linear prn p}
\gamma:G(m_1,V)\dasharrow\mathbb{P}(V^0),\ V_{m_1}\mapsto  
V_{m_1}\cap V^0.
\end{equation*}
Assume $G(m_1,V)$ is embedded into $\mathbb{P}(\wedge^{m_1}V)$ 
via the Pl\"ucker embedding, and let $Y:=\{V_{m_1}\in G(m_1,V
)\ |\ \dim(V_{m_1}\cap V^0)\ge2\}$. A standard computation in 
linear algebra shows that\\ 
(i) there exists a subspace $W\subset\wedge^{m_1}V$ of 
codimension $\dim V-m_1+1$, such that 
$Y=G(m_1,V)\cap \mathbb{P}(W)$;\\ 
(ii) there is an isomorphism $g:(\wedge^{m_1}V)/W
\xrightarrow{\simeq}V^0$ satisfying 
\begin{equation}\label{p vs g}
\gamma(V_{m_1})=g(\wedge^{m_1}V_{m_1}+W)
\end{equation}
(in particular, this implies that $\gamma$ is regular on 
$G(m_1,V)\setminus Y$);\\
(iii) there exists a vector space $U$ containing $\wedge^{m_1}
V$ as a subspace, together with a surjective operator 
$\varepsilon:U\twoheadrightarrow V$ with $\ker\varepsilon=W$.

In addition, we may suppose that $m_1$ is 
large enough so that there exists a subspace $Z$ of $W$ such 
that the morphism $\phi':\ G(m_1,V)\to \mathbb{P}((\wedge^{m_1}
V)/Z),\ V_{m_1}\mapsto\wedge^{m_1}V_{m_1}+Z$ is an embedding.
Set $V':=U$, $n_1=\dim Z+1,\ n_2=\dim W+m_2$. The inclusion
$\wedge^{m_1}V\subset V'$ yields an embedding $j:\mathbb{P}((
\wedge^{m_1}V)/Z)\hookrightarrow G(n_1,V'),\ v+Z\mapsto
\mathrm{Span}\{v+Z\}$. Define $\varphi_{1,1}: G(m_1,V)\to 
G(n_1,V')$ as the composition $j\circ\phi'$, and let 
$\varphi_{2,2}:G(m_2,V)\to G(n_2,V')$ be the standard 
extension defined by the flag $(W\subset U)$. 

We show now that, given a flag $(0\subset V_{m_1}\subset 
V_{m_2}\subset V)$, one has 
$\varphi_{1,1}(V_{m_1})\subset\varphi_{2,2}(V_{m_2})$, and 
hence there is a well-defined embedding 
$$\varphi:\ Fl(m_1,m_2,V)\hookrightarrow Fl(n_1,n_2,V'),\ 
(V_{m_1}\subset V_{m_2})\mapsto(\varphi_{1,1}(V_{m_1})\subset
\varphi_{2,2}(V_{m_2})).
$$ 
Indeed, in view of \eqref{p vs g}, the rational morphism 
$\gamma$ decomposes as
$$
\gamma:\ G(m_1,V)\overset{\phi'}{\hookrightarrow}\mathbb{P}
((\wedge^{m_1}V)/Z)\overset{q}{\dasharrow}\mathbb{P}(
(\wedge^{m_1}V)/W)\xrightarrow[\simeq]{g}\mathbb{P}(V^0),\\
$$
$$
V_{m_1}\overset{\phi'}{\mapsto}\wedge^{m_1}V_{m_1}+Z\overset
{q}{\mapsto}\wedge^{m_1}V_{m_1}+W\overset{g}
{\mapsto} V_{m_1}\cap V^0, 
$$
where $q$ is a rational surjective morphism. If 
$V_{m_1}\cap V^0=:V_1$ is a 1-dimensional space, 
i.e. if $q$ is regular at the point $\wedge^{m_1}V_{m_1}+Z
\in\mathbb{P}((\wedge^{m_1}V)/Z)$, then the inclusion 
$V_{m_1}\subset V_{m_2}$ implies $V_1\subset 
V_{m_2}$. Hence, $\phi_{1,1}(V_{m_1})=\wedge^{m_1}V_{m_1}+Z
\subset\wedge^{m_1}V_{m_1}+W=\varepsilon^{-1}(V_1)\subset 
\varepsilon^{-1}(V_{m_2})=\phi_{2,2}(V_{m_2})$. In the 
remaining case when $\dim(V_{m_1}\cap V^0)\ge2$, we have 
$\wedge^{m_1}V_{m_1}\subset W$ by property (i), and therefore
$\phi_{1,1}(V_{m_1})=\wedge^{m_1}V_{m_1}+Z\subset W\subset 
\varepsilon^{-1}(V_{m_2})=\phi_{2,2}(V_{m_2})$.

Finally, we have the following proposition.
\begin{proposition}
Let $\{\phi_k:\ Fl(m_{k,1},m_{k,2},V_k)\to Fl(m_{k+1,1},m_{k+1,
2},V_{k+1})\}_{k\ge1}$ be a chain of embeddings as constructed 
above. The ind-variety $\mathbf{X}$ obtained as the direct 
limit of this chain is not isomorphic to an ind-variety of 
generalized flags.
\end{proposition}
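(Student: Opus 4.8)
The plan is to analyze the two projections of $\mathbf{X}$ separately, to show that in the limit $\mathbf{X}$ acquires the structure of a (essentially trivial) $\mathbb{P}^{\infty}$-fibration over a \emph{genuine} ind-variety of generalized flags, and then to rule out such a structure for any $\mathbf{Fl}(\mathcal{F},E,V)$. First I would use, at each finite level, the factorization $\varphi_{k,1,1}=j_k\circ\varphi'_k$ coming from the construction, where $\varphi'_k\colon G(m_{k,1},V_k)\hookrightarrow\mathbb{P}((\wedge^{m_{k,1}}V_k)/Z_k)$ is an embedding and $j_k$ realizes this projective space as a projective subspace of $G(m_{k+1,1},V_{k+1})$. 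The key elementary point is that the Pl\"ucker embedding restricted to a projective subspace of a grassmannian is linear; hence $\varphi'_{k+1}\circ j_k$ is a \emph{linear} embedding of projective spaces, so the system $\{\mathbb{P}((\wedge^{m_{k,1}}V_k)/Z_k)\}$ is cofinal in the first-factor system $\{G(m_{k,1},V_k),\varphi_{k,1,1}\}$. This gives a first projection $\pi_1\colon\mathbf{X}\to\varinjlim G(m_{k,1},V_k)\cong\mathbb{P}^{\infty}$. Simultaneously each $\varphi_{k,2,2}$ is a standard extension, so by Theorem \ref{main thm} the second projection $\pi_2\colon\mathbf{X}\to\mathbf{Y}:=\varinjlim G(m_{k,2},V_k)$ lands in a bona fide ind-variety of generalized flags.

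Next I would identify $\mathbf{X}$ inside $\mathbb{P}^{\infty}\times\mathbf{Y}$. At each level the flag condition $\varphi_{k,1,1}(V_{m_1})\subset\varphi_{k,2,2}(V_{m_2})$ reduces, via $\varepsilon_k$, to the incidence $\gamma_k(V_{m_1})=V_{m_1}\cap V^0_k\subset V_{m_2}$, where on the relevant projective subspace $\gamma_k$ is the \emph{linear projection} with center $\mathbb{P}(W_k/Z_k)$ determined by the strict inclusion $Z_k\subsetneq W_k$; over that center the incidence is vacuous (there $\varphi_{k,1,1}(V_{m_1})\subset W_k\subset\varepsilon_k^{-1}(V_{m_2})$ for \emph{all} $V_{m_2}$). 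Passing to the limit, I expect $\mathbf{X}$ to be the closed incidence ind-subvariety $\{(x,y):\gamma(x)\subset y\}\subset\mathbb{P}^{\infty}\times\mathbf{Y}$, where $\gamma\colon\mathbb{P}^{\infty}\dashrightarrow\mathbb{P}(V^0)$, $V^0=\varinjlim V^0_k$, is the limiting linear projection with infinite-dimensional center.

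Consequently the map $(x,y)\mapsto(\gamma(x),y)$ should exhibit $\mathbf{X}$ as a fibration over the genuine generalized-flag ind-variety $\mathbf{Z}:=\{(\ell,y):\ell\subset y\}$, whose fibers are the closures of the fibers of $\gamma$, each isomorphic to $\mathbb{P}^{\infty}$. Because the center of $\gamma$ is infinite-dimensional, I expect an Eilenberg--swindle-type argument to trivialize this bundle and to yield $\mathbf{X}\cong\mathbb{P}^{\infty}\times\mathbf{Z}$, thereby reducing the present statement to the same phenomenon as in the first appendix example. Finally I would rule out that a nontrivial product $\mathbb{P}^{\infty}\times\mathbf{Z}$ is isomorphic to an ind-variety of generalized flags: by Corollary \ref{cor 5.3} and \cite{DP} every $\mathbf{Fl}(\mathcal{F},E,V)$ is homogeneous under the simple ind-group $SL(\infty)$ (respectively $O(\infty)$, $Sp(\infty)$), and its automorphism ind-group is correspondingly simple, whereas the automorphism ind-group of a nontrivial product respects, up to finite symmetry, the two factors and hence is not simple. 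Equivalently, and perhaps more robustly, the fiber-jump locus of $\pi_1$ (the image of the degenerate locus $Y$, over which the fiber of $\pi_1$ is all of $\mathbf{Y}$ rather than a proper subvariety) is a proper, nonempty, intrinsically defined closed sub-ind-variety, which cannot exist in a homogeneous ind-space.

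The hard part will be exactly this last circle of ideas. The Picard data do not distinguish $\mathbf{X}$ from a true $\mathbf{Fl}(\mathcal{F},E,V)$, since both arise from \emph{linear} embeddings; so the obstruction must be genuinely geometric. Thus the two delicate steps are: controlling the direct limit tightly enough to extract the product structure (the infinite-center/Eilenberg--swindle subtlety in passing from the finite-level fibrations to $\mathbb{P}^{\infty}\times\mathbf{Z}$), and then pinning down an automorphism-invariant feature—either the non-simplicity of $\mathrm{Aut}$ for a product, or the canonically defined fiber-jump stratification—that provably separates $\mathbf{X}$ from \emph{every} ind-variety of generalized flags.
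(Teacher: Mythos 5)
Your opening moves are sound and agree with the paper: the cofinality of the projective subspaces $\mathbb{P}((\wedge^{m_{k,1}}V_k)/Z_k)$ in the first-factor system (so that $\varinjlim G(m_{k,1},V_k)\cong\mathbb{P}^{\infty}$), and the identification of $\varinjlim G(m_{k,2},V_k)$ with an ind-grassmannian $\mathbf{Fl}(F_{\infty},E,V)$ via the standard extensions $(\phi_k)_{2,2}$, are both used in the actual proof. But from there your argument has two genuine gaps, each at a decisive step. First, the product decomposition $\mathbf{X}\cong\mathbb{P}^{\infty}\times\mathbf{Z}$ is not established: it rests on an unspecified ``Eilenberg-swindle'' trivialization, and the preceding identification of $\mathbf{X}$ with an incidence ind-subvariety of $\mathbb{P}^{\infty}\times\mathbf{Y}$ is itself unjustified. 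At every finite level $\mathbf{X}$ is the honest flag variety $Fl(m_{k,1},m_{k,2},V_k)$ --- homogeneous, with all fibers over the first factor isomorphic --- so the fiber-jump stratification you invoke is invisible at every finite level; whether the degeneracy loci $Y_k$ of the linear projections $\gamma_k$ propagate coherently under the maps $\phi_k$ (a point of $Y_k$ need not land in $Y_{k+1}$) is exactly what would have to be controlled, and nothing in your sketch controls it. Second, even granting the product structure, your exclusion step is unsupported: the simplicity of the automorphism ind-group of $\mathbf{Fl}(\mathcal{F},E,V)$ is neither proved in the paper nor easy (these automorphism groups contain non-finitary operators and are genuinely subtle), and the paper itself only \emph{asserts}, without proof, that $\mathbb{P}^{\infty}\times\mathbb{P}^{\infty}$ is not an ind-variety of generalized flags --- so there is no citable result your reduction could land on.

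The paper's proof avoids all of this by a representation-theoretic invariant you never reach. Assuming $\mathbf{X}\cong\mathbf{Y}$ for an ind-variety of generalized flags, it uses $\operatorname{Pic}\mathbf{X}\cong\mathbb{Z}\times\mathbb{Z}$ to force $\mathbf{Y}\cong\mathbf{Fl}((F_1'\subset F_2'),E',V')$ with a flag of length two, and uses the factorization of $(\phi_k)_{1,1}$ through projective spaces together with \cite[Thm.~2]{PT} to force $\dim F_1'=1$ (up to restricted duality). After identifying the grassmannian quotients, it compares the $SL(E,V)$-module $\Gamma=H^0(\cdot,\mathcal{O}(1,0))$, computed via the unique $SL(E,V)$-linearization, on both sides: on $\mathbf{Fl}(F,E,V)$ one gets $\Gamma\simeq\varprojlim V_k^*\simeq V^*$, while via $\sigma$ and the projections $\tau_k$ one gets $\Gamma\simeq\varprojlim\wedge^{m_k}V_k^*$; these are non-isomorphic already after restriction to $SL(V_k)$ for large $k$, a concrete contradiction. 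Your proposal would need either to supply this kind of computable invariant or to prove the two hard geometric claims (the limit product structure and the non-realizability of products) from scratch; as written, it does not constitute a proof.
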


\begin{proof}
Assume to the contrary that $\mathbf{X}$ is isomorphic to 
$\mathbf{Y}$ for some ind-variety of generalized flags 
$\mathbf{Y}$. Since the embeddings $\phi_k$ are linear, it 
follows that $\mathrm{Pic}\mathbf{X}\simeq\mathbb{Z}\times
\mathbb{Z}$. Therefore $\mathrm{Pic}\mathbf{Y}\simeq\mathbb{Z}
\times\mathbb{Z}$, and consequently, $\mathbf{Y}$ is 
isomorphic to $\mathbf{Fl}(F',E',V')$ for some
countable-dimensional vector space $V'$, some basis $E'$ of 
$V'$, and some flag $F'=(F'_1\subset F'_2)$ in $V'$ of length 
2. Since the 
morphisms $(\phi_k)_{1,1}:\ \ G(m_{k,1},V_k)\to G(m_{k+1,1},
V_{k+1})$ factor through projective spaces, the ind-variety 
$\mathbf{X}$ projects onto $\mathbb{P}^{\infty}$ in a way that 
the line bundle $\mathcal{O}_{\mathbf{X}}(1,0)$ is trivial 
along the fibers of the projection. Therefore, we infer that
$\dim F'_1=1$ or $\mathrm{codim}_{V'}F'_2=1$. This follows 
from the fact that the ind-variety $\mathbb{P}^{\infty}$ is not
isomorphic to any ind-grassmannian $\mathbf{Fl}(F,E',V')$, 
where $F$ is a single subspace with $\dim F\ge2$ and 
$\mathrm{codim}_{V'}F'\ne1$, see \cite[Thm. 2]{PT}. 
Consequently, the flag $F'=(F'_1\subset F'_2)$ can be chosen 
with $\dim F'_1=1$ (in the case where $\mathrm{codim}_{V'}F'_2
=1$ one replaces $V'$ by its restricted dual space defined by 
the basis $E'$). 

The standard extensions $(\phi_k)_{2,2}:G(m_{k,2},V_k)\to G(
m_{k+1,2},V_{k+1})$ allow to identify $\varinjlim G(m_{k,2},V
_k)$ with an ind-grassmannian $\mathbf{Fl}(F_{\infty},E,V)$, 
where $F_{\infty}$ is a subspace of $V=\varinjlim V_k$ and $E$ 
is an appropriate basis of $V$. Moreover, we have $\dim 
F_{\infty}=\infty=\mathrm{codim}_{V}F_{\infty}$, as the 
construction of $\phi_k$ shows that $\lim\limits_{k\to\infty}
m_{k,2}=\infty=\lim\limits_{k\to\infty}(\dim V_k-m_{k,2})$. 
After identifying the triples $(F_{\infty},E,V)$ and $(F'_2,E',
V')$, we obtain a commutative diagram
$$
\xymatrix{
\mathbf{X}\ar[dr]_-{\pi_{\mathbf{X}}}& 
\mathbf{Fl}(F,E,V)\ar[d]^-{\pi}\ar[l]^-{\sim}_-{\sigma}\\
& \mathbf{Fl}({F_{\infty}},E,V),} 
$$
where $\pi$ is the natural projection and $\sigma$ is an 
isomorphism of ind-varieties. The fibers of both projections 
$\pi_{\mathbf{X}}$ and $\pi$ are isomorphic to $\mathbb{P}
^{\infty}$.

We will show now that the existence of the isomorphism 
$\mathbf{X}\xleftarrow[\sim]{\sigma}\mathbf{Fl}(F,E,V)$ is 
contradictory. Recall that the group $GL(E,V)$ of invertible finitary linear operators defined by $E$ (i.e. the group of 
invertible linear generators on $V$ each of which fixes all but finitely many elements of $E$) acts on $\mathbf{Fl}(F,E,V)
$ and $\mathbf{Fl}({F_{\infty}},E,V)$, and the line bundle 
$\mathcal{O}(1,0):=\sigma^*\mathcal{O}_{\mathbf{X}}(1,0)$ on 
$\mathbf{Fl}(F,E,V)$ admits a $GL(E,V)$-linearization. This 
linearization is unique when restricted to $SL(E,V)$. If we 
compute the $SL(E,V)$-module $\Gamma:=H^0(\mathbf{Fl}(F,E,V),
\mathcal{O}(1,0))$, we see that $\Gamma\simeq\varprojlim 
H^0(\pi_{k*}(\mathcal{O}(1,0)|_{Fl(1,m_{k,2},V_k)}))$, 
where here $\pi_k: Fl(1,m_{k,2},V_k)\to Gr(m_{k,2},V_k)$ 
denote the natural projections. Consequently,
$$
\Gamma\simeq\varprojlim V_k^*\simeq V^*.
$$

On the other hand, since $\sigma^*$ induces an 
$SL(E,V)$-linearization on $\mathcal{O}_{\mathbf{X}}(1,0)$, and
consequently an isomorphism of $SL(E,V)$-modules
$\Gamma\xrightarrow{\sim}H^0(\mathbf{X},
\mathcal{O}_{\mathbf{X}}(1,0))$, we can compute $\Gamma$ via 
the system of projections $\tau_k: Fl(m_{k,1},m_{k,2},V_k)\to 
G(m_{k,2},V_k)$. This yields
$$
\Gamma\simeq\varprojlim 
H^0(\tau_{k*}(\mathcal{O}_{\mathbf{X}}(1,0)|_{Fl(m_{k,1},
m_{k,2},V_k)}))\simeq\varprojlim\wedge^{m_k}V_k^*.
$$
However, $\varprojlim\wedge^{m_k} V_k^*$ is not isomorphic to
$V^*$ as an $SL(E,V)$-module. To see this, it is enough to 
observe that $\varprojlim\wedge^{m_k} V_k^*$ and $V^*$ are
non-isomorphic after restriction to $SL(V_k)$ for large $k$.
We have a contradiction as desired.
\end{proof}

\vspace{5mm}


\begin{thebibliography}{MMMMM}

%
\bibitem[DP]{DP} I. Dimitrov, I. Penkov. {\it Ind-varieties of 
generalized flags as homogeneous spaces for classical 
ind-groups}. IMRN  No. {\bf 55} (2004), 2935-2953.

\bibitem[DPW]{DPW} I. Dimitrov, I. Penkov, J. A. Wolf. {\it A 
Bott-Borel-Weil theory for direct limits of algebraic groups.} 
Amer. J. Math. {\bf 124} (2002), 955-998.
%
%
%
%
%

%
%
\bibitem[PT]{PT} I. Penkov, A. S. Tikhomirov. {\it Linear 
ind-Grassmannnians.} Pure and Appl. Math. Quarterly {\bf10}:2 
(2014), 289-323.
%
%
%
%
%

\bibitem[S]{S}
I. R. Shafarevich, Basic Algebraic Geometry 1, 3rd ed., Springer Verlag, 2013.

%
%

\vspace{1cm}



\end{thebibliography}
\end{document}